\author{Boris Bukh\thanks{Department of Mathematical Sciences, Carnegie Mellon University, Pittsburgh, PA 15213, USA. Partly supported by a Sloan Research Fellowship.}${\ }^{,}$\footnote{Supported in part by U.S. taxpayers through NSF grant DMS-1301548.} \and Zilin Jiang\thanks{Department of Mathematics, the Technion -- Israel Institute of Technology, Technion City, Haifa 3200003.}${\ }^{,}$\footnotemark[2]}
\title{Bipartite algebraic graphs without quadrilaterals}
\date{}
\DeclareMathOperator{\ex}{ex}                                   
\newtheorem{definition}{Definition}
\newtheorem{theorem}{Theorem}
\newtheorem{proposition}[theorem]{Proposition}
\newtheorem*{informal_conjecture*}{Informal conjecture}
\newtheorem{conjecture}{Conjecture}
\newtheorem{false_conjecture}[conjecture]{False Conjecture}
\newtheorem{lemma}[theorem]{Lemma}
\newtheorem{corollary}[theorem]{Corollary}
\theoremstyle{remark}
\newtheorem{remark}{Remark}
\newtheorem{example}{Example}
\newcommand{\NN}{\mathbb{N}}
\newcommand{\CC}{\mathbb{C}}
\newcommand{\FF}{\mathbb{F}}
\newcommand{\KK}{\mathbb{K}}
\newcommand{\PP}{\mathbb{P}}
\renewcommand{\AA}{\mathbb{A}}
\newcommand{\cc}{{\overline{c}}}
\newcommand{\dd}{\overline{d}}
\newcommand{\xx}{{\overline{x}}}
\newcommand{\yy}{\overline{y}}
\newcommand{\uu}{\overline{u}}
\newcommand{\vv}{\overline{v}}
\newcommand{\from}{\colon}
\newcommand{\variety}[1]{\left\{{#1}\right\}}
\newcommand{\rto}{\dashrightarrow}
\newcommand{\CChom}{\CC_{\mathrm{hom}}}
\newcommand{\KKhom}{\KK_{\mathrm{hom}}}
\newcommand{\aut}[1]{\operatorname{Cr}\left(#1\right)}
\newcommand{\autreg}[1]{\operatorname{Aut}\left(#1\right)}
\newcommand{\abs}[1]{\lvert #1\rvert}
\newcommand{\set}[2]{\left\{#1 : #2\right\}}
\newcommand{\id}{\operatorname{id}}
\newcommand{\gcdp}[1]{\gcd\left({#1}\right)}
\newcommand{\Lr}{\mathcal{L}_r}
\begin{document}
\maketitle

\begin{abstract}
  Let $\mathbb{P}^s$ be the $s$-dimensional complex projective space, and let $X, Y$ be two non-empty open subsets of $\mathbb{P}^s$ in the Zariski topology. A hypersurface $H$ in $\mathbb{P}^s\times\mathbb{P}^s$ induces a bipartite graph $G$ as follows: the partite sets of $G$ are $X$ and $Y$, and the edge set is defined by $\overline{u}\sim\overline{v}$ if and only if $(\overline{u},\overline{v})\in H$. Motivated by the Tur\'an problem for bipartite graphs, we say that $H\cap (X\times Y)$ is $(s,t)$-grid-free provided that $G$ contains no complete bipartite subgraph that has $s$ vertices in $X$ and $t$ vertices in $Y$. We conjecture that every $(s,t)$-grid-free hypersurface is equivalent, in a suitable sense, to a hypersurface whose degree in $\overline{y}$ is bounded by a constant $d = d(s,t)$, and we discuss possible notions of the equivalence.
  
  We establish the result that if $H\cap(X\times \mathbb{P}^2)$ is $(2,2)$-grid-free, then there exists $F\in \mathbb{C}[\overline{x},\overline{y}]$ of degree $\le 2$ in $\overline{y}$ such that $H\cap(X\times \mathbb{P}^2) = \{F = 0\}\cap (X\times \mathbb{P}^2)$. Finally, we transfer the result to algebraically closed fields of large characteristic.
\end{abstract}

\section{Introduction}

  The Tur\'{a}n number $\ex(n, F)$ is the maximum number of edges in an $F$-free graph\footnote{We say a graph is $F$-free if it does not have a subgraph isomorphic to $F$.} on $n$ vertices. The first systematic study of $\ex(n, F)$ was initiated by Tur\'{a}n \cite{turan1941extremal}, who solved the case when $F = K_t$ is a complete graph on $t$ vertices. Tur\'{a}n's theorem states that, on a given vertex set, the $K_t$-free graph with the most edges is the complete and balanced $(t-1)$-partite graph, in that the part sizes are as equal as possible.
  
  For general graphs $F$, we still do not know how to compute the Tur\'{a}n number exactly, but if we are satisfied with an approximate answer, the theory becomes quite simple: Erd\H{o}s and Stone \cite{MR0018807} showed that if the chromatic number $\chi(F)=t$, then $\ex(n, F) = \ex(n, K_t) + o(n^2) = \left(1-\frac{1}{t-1}\right){n\choose 2}+o(n^2)$. When $F$ is not bipartite, this gives an asymptotic result for the Tur\'{a}n number. On the other hand, for all but few bipartite graphs $F$, the order of $\ex(n,F)$ is not known. Most of the research on this problem focused on two classes of graphs: complete bipartite graphs and cycles of even length. A comprehensive survey is given by F\"{u}redi and Simonovits \cite{MR3203598}. 
  
  Suppose $G$ is a $K_{s,t}$-free graph with $s \le t$. The K\"{o}vari--S\'{o}s--Tur\'{a}n theorem \cite{MR0065617} implies an upper bound $\ex(n, K_{s,t}) \le \frac{1}{2}\sqrt[s]{t-1}\cdot n^{2-1/s}+o(n^{2-1/s})$, which was improved by F\"{u}redi \cite{MR1395691} to 
  $$\ex(n, K_{s,t}) \le \frac{1}{2}\sqrt[s]{t-s+1}\cdot n^{2-1/s} + o(n^{2-1/s}).$$
  Despite the lack of progress on the Tur\'{a}n problem for complete bipartite graphs, there are certain complete bipartite graphs for which the problem has been solved asymptotically, or even exactly. The constructions that match the upper bounds in these cases are all similar to one another. Each of the constructions is a bipartite graph $G$ based on an algebraic hypersurface\footnote{An algebraic hypersurface in a space of dimension $n$ is an algebraic subvariety of dimension $n-1$. The terminology from algebraic geometry used throughout the article is standard, and can be found in \cite{MR3100243}.} $H$. Both partite sets of $G$ are $\FF_p^s$ and the edge set is defined by: $\uu\sim\vv$ if and only if $(\uu, \vv)\in H$. In short, $G = \left(\FF_p^s, \FF_p^s, H(\FF_p)\right)$, where $H(\FF_p)$ denotes the $\FF_p$-points of $H$. Note that $G$ has $n := 2p^s$ vertices.
  
  In the previous works of Erd\H{o}s, R\'{e}nyi and S\'{o}s \cite{MR0223262}, Brown \cite{MR0200182}, F\"{u}redi \cite{MR1395763}, Koll\'{a}r, R\'{o}nyai and Szab\'{o} \cite{MR1417348} and Alon, R\'{o}nyai and Szab\'{o} \cite{MR1699238}, various hypersurfaces were used to define $K_{s,t}$-free graphs. Their equations were \begin{subequations}\label{eq1}
    \begin{align}
      x_1y_1 + x_2y_2 = 1, & \quad \text{for }K_{2,2}; \label{eq1a} \\
      (x_1-y_1)^2 + (x_2-y_2)^2 + (x_3-y_3)^2 = 1, & \quad \text{for }K_{3,3}; \label{eq1b} \\
      (N_s\circ\pi_s)(x_1 + y_1, x_2 + y_2, \dots, x_s + y_s) = 1, & \quad \text{for }K_{s,t} \text{ with }t \ge s! + 1; \label{eq1c} \\
      (N_{s-1}\circ\pi_{s-1})(x_2 + y_2, x_3 + y_3, \dots, x_s + y_s) = x_1y_1, & \quad \text{for }K_{s,t} \text{ with }t \ge (s-1)! + 1, \label{eq1d}
    \end{align}
  \end{subequations}
  where $\pi_s\from \FF_p^s \to \FF_{p^s}$ is an $\FF_p$-linear isomorphism and $N_s(\alpha)$ is the field norm, $N_s(\alpha) :=\alpha^{(p^s-1)/(p-1)}$.
  
  Clearly, the coefficients in \eqref{eq1a} and \eqref{eq1b} are integers and even independent of $p$. With some work, one can show that both \eqref{eq1c} and \eqref{eq1d} are polynomial equations of degree $\le s$ with coefficients in $\FF_p$. Therefore each equation in \eqref{eq1} can be written as $F(\xx, \yy) := F(x_1, \dots, x_s, y_1, \dots, y_s) = 0$ for some $F(\xx,\yy)\in \FF_p[\xx,\yy]$ of bounded degree. The previous works directly count the number of $\FF_p$ solutions to $F(\xx,\yy)=0$ and yield $\abs{H(\FF_p)} = \Theta(p^{2s-1}) = \Theta(n^{2-1/s})$, for each prime\footnote{We need $p \equiv 3\pmod4$ for \eqref{eq1b} to get the correct number of $\FF_p$ points on $H$. If $p \equiv 1\pmod4$, then the right hand side of \eqref{eq1b} should be replaced by a quadratic non-residue in $\FF_p$.}~$p$.

  \begin{definition}\label{kst}
    Given two sets $P_1$ and $P_2$, a set $V\subset P_1\times P_2$ is said to contain an $(s,t)$-grid if there exist $S \subset P_1, T\subset P_2$ such that $s=\abs{S}$, $t=\abs{T}$ and $S\times T\subset V$. Otherwise, we say that $V$ is $(s,t)$-grid-free.
  \end{definition}

  Observe that every $F(\xx,\yy)$ derived from \eqref{eq1} is symmetric in $x_i$ and $y_i$ for all $i$. We know that $(\uu,\vv)\in H$ if and only if $(\vv, \uu)\in H$ for all $\uu,\vv\in\FF_p^s$. The resulting bipartite graph $G = \left(\FF_p^s, \FF_p^s, H(\FF_p)\right)$ would be an extremal $K_{s,t}$-free graph if $H(\FF_p)$ had been $(s,t)$-grid-free.
  
  So which graphs are $K_{s,t}$-free with a maximum number of edges? The question was considered by Zolt\'{a}n F\"{u}redi in his unpublished manuscript \cite{furedi1988quadrilateral} asserting that every $K_{2,2}$-free graph with $q$ vertices (for $q \ge q_0$) and $\frac{1}{2}q(q+1)^2$ edges is obtained from a projective plane via a polarity with $q+1$ absolute elements. This loosely amounts to saying that all extremal $K_{2,2}$-free graphs are defined by generalization of \eqref{eq1a}.
  
  However, classification of all extremal $K_{s,t}$-free graphs seems out of reach. We restrict our attention to algebraically constructed graphs. Given a field $\FF$ and a hypersurface $H$ defined over $\FF$, it is natural to ask when $H(\FF)$ is $(s,t)$-grid-free. Because the general case is difficult, we work with algebraically closed fields $\KK$ in this paper. Denote by $\PP^s(\KK)$ the $s$-dimensional projective space over $\KK$. We are interested in hypersurface $H$ in $\PP^s(\KK)\times\PP^s(\KK)$.
  
  Since standard machinery from model theory, to be discussed in Section~\ref{sec_finite_char}, allows us to transfer certain results over $\CC$ (the field of complex numbers) to algebraically closed fields of large characteristic, our focus will be on the $\KK = \CC$ case. We use $\PP^s$ for the $s$-dimensional complex projective space and $\AA^s := \PP^s \setminus \variety{x_0 = 0}$ for the $s$-dimensional complex affine space.

  Note that even if $H$ contains $(s,t)$-grids, one may remove a few points from the projective space to destroy all $(s,t)$-grids in $H$. For example, the homogenization of \eqref{eq1b} is
  $$(x_1y_0 - x_0y_1)^2 + (x_2y_0 - x_0y_2)^2 + (x_3y_0 - x_0y_3)^2 = x_0^2y_0^2.$$
  The equation defines hypersurface $H$ in $\PP^3\times \PP^3$. Let $V := \{x_0 = x_1^2 + x_2^2 + x_3^2 = 0\}$ be a variety in $\PP^3$. Since $V\times \PP^3\subset H$, $H$ contains a lot of $(3,3)$-grids. However, $H\cap(\AA^3\times\AA^3)$ is $(3,3)$-grid-free. 
  \begin{definition}\label{def:almost}
    A set $V\subset \PP^s\times \PP^s$ is almost-$(s,t)$-grid-free if there are two nonempty Zariski-open sets $X, Y\subset\PP^s$ such that $V\cap(X\times Y)$ is $(s,t)$-grid-free.
  \end{definition}

  Suppose the defining equation of $H$, say $F(\xx,\yy)$, is of low degree in $\yy$. Heuristically, for generic\footnote{Henceforth, a statement is true for a generic point $\uu \in \PP^s$ means that there exists a nonempty Zariski-open set $U \subset \PP^s$ such that the statement is true for every $\uu \in U$.} distinct $\uu_1, \dots, \uu_s\in\PP^s$, by B\'{e}zout's theorem, one would expect $\{F(\uu_1,\yy) = \dots = F(\uu_s,\yy) = 0\}$ to have few points. So we conjecture the following.
  \begin{informal_conjecture*}
    Every almost-$({s,t})$-grid-free hypersurface is equivalent, in a suitable sense, to a hypersurface whose degree in $\yy$ is bounded by some constant $d := d(s,t)$.
  \end{informal_conjecture*}
  The right equivalence notion depends on $X$ and $Y$ in Definition~\ref{def:almost}. We shall discuss possible notions of equivalence in Section~\ref{sec_conj}, and make three specific conjectures. Results in support of these conjectures can be found in Section \ref{sec_s=1} and Section \ref{sec_partial}. 
  
  Before we make our conjectures precise, we note that an analogous situation occurs for $C_{2t}$-free graphs. The upper bound $\ex(n, C_{2t}) = O(n^{1+1/t})$ first established by Bondy--Simonovits \cite{MR0340095} has been matched only for $t=2,3,5$. The $t=2$ case was already mentioned above because $C_4 = K_{2,2}$. The constructions for $t=3, 5$ are also algebraic (see \cite{MR0197342,MR2227729} for $t = 3$ and \cite{MR0197342,MR1109426} for $t=5$). Also, a conjecture in a similar spirit about algebraic graphs of girth eight was made by Dmytrenko, Lazebnik and Williford \cite{MR2359323}. It was recently resolved by Hou, Lappano and Lazebnik \cite{MR3575461}.

  The paper is organized as follows. In Section \ref{sec_conj} we flesh out the informal conjecture above, in Section~\ref{sec_s=1} we briefly discuss the $s=1$ case, in Section~\ref{sec_partial} we partially resolve the $s=t=2$ case, and finally in Section \ref{sec_finite_char}, we consider algebraically closed fields of large characteristic.
  
\section{Conjectures on the $({s,t})$-grid-free case}\label{sec_conj}
  
  Given a field $\FF$, we denote by $\FF_{\mathrm{hom}}[\xx]$ the set of homogeneous polynomials in $\FF[\xx]$ and by $\FF_{\mathrm{hom}}[\xx,\yy]$ the set of polynomials in $\FF[\xx,\yy]$ that are separately homogeneous in $\xx$ and $\yy$.
  
  We might be tempted to guess the following instance of the informal conjecture.
  \begin{false_conjecture}\label{false}
    If $H$ is almost-$({s,t})$-grid-free, then there exists $F(\xx,\yy)\in\CChom[\xx,\yy]$ of degree $\le d$ in $\yy$ for some $d = d(s,t)$ such that $H = \variety{F=0}$.
  \end{false_conjecture}
  
  Unfortunately, Conjecture \ref{false} is false because of the following example.
  
  \begin{example}\label{eg.line}
    Consider $H_0:=\{x_0y_0 + x_1y_1 + x_2y_2 = 0\}$ and $H_1$ defined by \begin{equation}\label{arb.large}
    	x_0y_0^d +x_1y_0^{d-1}y_1 + x_2\left(y_0^{d-1}y_2+y_0^df(y_1/y_0)\right) = 0,
    \end{equation} where $f$ is a polynomial of degree $d$. One can check that both $H_0$ and $H_1\setminus\{y_0=0\}$ are $({2,2})$-grid-free, whereas equation \eqref{arb.large} can be of arbitrary large degree in $\yy$.
  \end{example}
  
  Behind Example \ref{eg.line} is the birational automorphism $\sigma\from \PP^2\rto\PP^2$ defined by $$\sigma(y_0:y_1:y_2):=\left(y_0^d: y_0^{d-1}y_1:y_0^{d-1}y_2+y_0^df(y_1/y_0)\right).$$ Note that $\id\times\sigma$ is a biregular map\footnote{A biregular map is a regular map whose inverse is also regular.} from $H_1\setminus\{y_0=0\}$ to $H_0\setminus\{y_0=0\}$. Composition with the automorphism increased the degree of $H_0$ in $\yy$ while preserving almost-$(2,2)$-grid-freeness. 
  Here is another example illustrating the relationship between birational automorphisms and $({s,t})$-grid-free hypersurfaces.
  
  \begin{example}
    Define $H_2 := \{x_0y_1y_2 + x_1y_0y_2 + x_2y_0y_1 = 0\}$. One can also check that $H_2\setminus \variety{y_0y_1y_2=0}$ is $(2,2)$-grid-free. Behind this example is \emph{the standard quadratic transformation} $\sigma$ from $\PP^2$ to itself given by $\sigma(y_0:y_1:y_2)=(y_1y_2:y_0y_2:y_0y_1)$. Note that $\id\times \sigma$ is a biregular map from $H_2\setminus\variety{y_0y_1y_2=0}$ to $H_0\setminus\variety{y_0y_1y_2=0}$.
  \end{example}
  
  Let $\aut{\PP^s}$ be the group of birational automorphisms on $\PP^s$, also known as \emph{the Cremona group}. Evidently, the almost-$({s,t})$-grid-freeness is invariant under $\aut{\PP^s}\times \aut{\PP^s}$.
  
  \begin{proposition}
    If $V_1\subset\PP^s\times\PP^s$ is an almost-$(s,t)$-grid-free set, then so is $V_2 := (\sigma_X\times\sigma_Y)V_1$ for all $\sigma_X, \sigma_Y\in\aut{\PP^s}$.
  \end{proposition}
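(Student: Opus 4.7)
The plan is to exploit the fact that although $\sigma_X$ and $\sigma_Y$ are only birational, each one restricts to a biregular isomorphism between two dense Zariski-open subsets of $\PP^s$. Since almost-$(s,t)$-grid-freeness is preserved under shrinking the witnessing open sets, I can arrange to work entirely on the biregular locus and reduce the problem to a trivial set-theoretic transport of structure.

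First, fix nonempty Zariski-open sets $X_1, Y_1 \subset \PP^s$ such that $V_1 \cap (X_1 \times Y_1)$ is $(s,t)$-grid-free. Let $U_X \subset \PP^s$ be a nonempty Zariski-open subset on which $\sigma_X$ is a biregular isomorphism onto its image $U_X^* := \sigma_X(U_X)$; concretely, take $U_X$ to be the intersection of the domain of definition of $\sigma_X$ with the preimage of the domain of definition of $\sigma_X^{-1}$. Define $U_Y$ and $U_Y^*$ analogously. After replacing $X_1, Y_1$ by the still nonempty open subsets $X_1 \cap U_X$ and $Y_1 \cap U_Y$, I may assume $X_1 \subset U_X$ and $Y_1 \subset U_Y$; this can only shrink the grid-free box, so the grid-freeness hypothesis is preserved.

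Next, set $X_2 := \sigma_X(X_1) \subset U_X^*$ and $Y_2 := \sigma_Y(Y_1) \subset U_Y^*$; these are nonempty Zariski-open, because a biregular map is a homeomorphism in the Zariski topology. The product $\sigma_X \times \sigma_Y$ restricts to a biregular isomorphism from $U_X \times U_Y$ onto $U_X^* \times U_Y^*$, and hence restricts further to a bijection between $V_1 \cap (X_1 \times Y_1)$ and $V_2 \cap (X_2 \times Y_2)$. Because the bijection is a product of coordinate-wise bijections, it preserves Cartesian-product structure: whenever $S \subset X_2$ and $T \subset Y_2$ satisfy $S \times T \subset V_2$, the sets $\sigma_X^{-1}(S) \subset X_1$ and $\sigma_Y^{-1}(T) \subset Y_1$ have the same cardinalities as $S$ and $T$ and satisfy $\sigma_X^{-1}(S) \times \sigma_Y^{-1}(T) \subset V_1$. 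Hence an $(s,t)$-grid in $V_2 \cap (X_2 \times Y_2)$ would pull back to an $(s,t)$-grid in $V_1 \cap (X_1 \times Y_1)$, contradicting the hypothesis.

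There is no serious obstacle here; the proof is essentially bookkeeping. The only delicate point is being precise about what $(\sigma_X \times \sigma_Y)V_1$ should mean when neither factor is a morphism on all of $\PP^s$. This is absorbed by restricting attention to the maximal biregular locus $U_X \times U_Y$, on which the map is an honest bijection and the image of $V_1$ is unambiguous; any pathology that occurs outside this locus is irrelevant to almost-$(s,t)$-grid-freeness, which requires only a single pair of nonempty open witnesses.
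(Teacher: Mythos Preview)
Your argument is correct; the paper states this proposition without proof, treating it as self-evident, and your write-up supplies exactly the expected verification. The only point worth noting is your own caveat about the meaning of $(\sigma_X\times\sigma_Y)V_1$ off the biregular locus, which you handle correctly by passing to $U_X\times U_Y$ and observing that almost-$(s,t)$-grid-freeness only needs one pair of nonempty open witnesses.
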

  
  \begin{proof}
    Suppose $\sigma_X\from X_1' \to X_2', \sigma_Y\from Y_1' \to Y_2'$ are isomorphisms respectively, where $X_1', X_2', Y_1', Y_2'$ are nonempty Zariski-open subsets of $\PP^s$. There are nonempty open subsets $X_1\subset X_1', Y_1\subset Y_1'$ of $\PP^s$, such that $V_1\cap(X_1\times Y_1)$ is $(s,t)$-grid-free.  Now we know that
    $$V_2\cap \left(\sigma_X\left(X_1\right)\times \sigma_Y\left(Y_1\right)\right) = (\sigma_X\times\sigma_Y)\left(V_1\cap \left(X_1\times Y_1\right)\right)$$ is $(s,t)$-grid-free too and that $\sigma_X(X_1),\sigma_Y(Y_1)$ are nonempty and Zariski-open.
  \end{proof}

  \begin{remark}
  Though little is known about the structure of the Cremona group in 3 dimensions and higher, the classical Noether--Castelnuovo theorem says that the Cremona group $\aut{\PP^2}$ is generated by the group of projective linear transformations and the standard quadratic transformation. The proof of this theorem, which is very delicate, can be found in \cite[Chapter~8]{MR1874328}.
  \end{remark}
  
  We say that two sets $V_1,V_2\subset\PP^s\times\PP^s$ are \emph{almost equal} if there exist nonempty Zariski-open sets $X, Y\subset\PP^s$ such that $V_1\cap(X\times Y) = V_2\cap(X\times Y)$. We believe that the only obstruction to Conjecture~\ref{false} is the Cremona group.

  \begin{conjecture}\label{conj}
    Suppose $H$ is a hypersurface in $\PP^s\times\PP^s$. If $H$ is almost-$({s,t})$-grid-free, then there exist $\sigma \in \aut{\PP^s}$ and $F(\xx,\yy)\in\CChom[\xx,\yy]$ of degree $\le d$ in $\yy$ for some $d = d(s,t)$ such that $H$ is almost equal to $\variety{F\circ(\id\times\sigma)=0}$.
  \end{conjecture}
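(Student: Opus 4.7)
The plan is to pass from the hypersurface $H$ to its family of $\yy$-slices and to combine B\'{e}zout's theorem with the classical theory of linear systems and Cremona transformations. Fix a defining polynomial $F(\xx,\yy)\in\CChom[\xx,\yy]$ of bi-degree $(d_x,d_y)$ and, for each $\uu\in X$, set $C_\uu := \set{\yy\in\PP^s}{F(\uu,\yy)=0}$. The $C_\uu$ form a linear system $\mathcal{L}$ of degree-$d_y$ hypersurfaces in $\PP^s$, parameterized by the image of $X$ in a projective space. The almost-$(s,t)$-grid-free hypothesis becomes: for generic $\uu_1,\dots,\uu_s\in X$, the intersection $C_{\uu_1}\cap\dots\cap C_{\uu_s}$ meets $Y$ in at most $t-1$ points. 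Since B\'{e}zout gives $d_y^s$ intersection points counted with multiplicity, once trivially shared components of $\mathcal{L}$ are factored out, at least $d_y^s-(t-1)$ of them must lie in $Z:=\PP^s\setminus Y$.

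I would then argue that this constraint forces $\mathcal{L}$ to have a base scheme of degree $\Omega(d_y^s)$ supported in $Z$. The key dichotomy is: either the excess intersection points are genuine base points of $\mathcal{L}$, or the moving part of $C_{\uu_1}\cap\dots\cap C_{\uu_s}$ sweeps a positive-dimensional subvariety forced to lie in $Z$, which ultimately yields a fixed component of $\mathcal{L}$ lying in $Z$ (to be pulled out of $\mathcal{L}$) or additional base points after an induction on degree. With a large base scheme $B\subset Z$, I would invoke the theory of Cremona transformations: for $s=2$ the Noether--Castelnuovo theorem lets me construct a quadratic $\sigma\in\aut{\PP^2}$ based at three (possibly infinitely near) points of $B$ satisfying $\deg\sigma^{*}\mathcal{L}<\deg\mathcal{L}$. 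Iterating terminates once $d_y\le d(s,t)$; the composed birational map is the sought $\sigma$, and $F\circ(\id\times\sigma^{-1})$ is the desired low-degree equation.

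The main obstacle is twofold. First, for $s\ge 3$ the structure of $\aut{\PP^s}$ is poorly understood and there is no analog of Noether--Castelnuovo, so the inductive reduction lacks a clean driver and one would need either new birational tools or a direct construction of $\sigma$ from the base scheme. Second, even for $s=2$ one must carefully track multiplicities of possibly non-reduced, infinitely near base points on $Z$: the self-intersection identity $d_y^2=\sum_p m_p(C_{\uu_1})\,m_p(C_{\uu_2})+\#(\text{moving intersection})$ together with the genus bound must be used to certify that each Cremona step strictly decreases the degree rather than stalling at a singular configuration of base points.
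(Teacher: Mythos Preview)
The statement you are attempting to prove is \emph{Conjecture~\ref{conj}} in the paper, and the paper does not prove it; it is stated as an open problem. The paper only establishes the special cases $s=1$ (Corollary~\ref{c.s=1}) and $s=t=2$, $Y=\PP^2$ (Theorem~\ref{main_partial_result}), and even those are proved for the stronger Conjecture~\ref{strong}. So there is no ``paper's own proof'' to compare your proposal against.

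Your proposal is not a proof but an outline, and you yourself correctly identify the genuine obstructions. The most serious one is fatal to the approach as stated: for $s\ge 3$ there is no analogue of the Noether--Castelnuovo theorem, the structure of $\aut{\PP^s}$ is not understood, and in particular there is no known mechanism that guarantees the existence of a Cremona transformation lowering the degree of a linear system with prescribed base locus. Without such a driver the iterative reduction simply does not get off the ground, so the plan cannot yield a proof of the conjecture in the generality stated. Even for $s=2$, the step ``at least $d_y^s-(t-1)$ intersection points must lie in $Z$'' does not by itself force a base scheme of degree $\Omega(d_y^2)$: those points could a priori move with the choice of $\uu_1,\uu_2$ inside the fixed closed set $Z$, and turning this into honest base points (possibly infinitely near) with multiplicities satisfying the Noether inequality $m_1+m_2+m_3>d_y$ is exactly the hard part you flag but do not resolve. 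The paper's partial result for $s=t=2$ avoids this entirely by taking $Y=\PP^2$, so $Z=\emptyset$ and Moura's intersection-multiplicity theorem forces each section to be an irreducible conic directly, with no Cremona reduction needed.
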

  
  In fact, we believe in an even stronger conjecture.
  
  \begin{conjecture}\label{strong}
  	Suppose $H$ is a hypersurface in $\PP^s\times\PP^s$. Let $X, Y$ be nonempty Zariski-open subsets of $\PP^s$. If $H\cap (X\times Y)$ is $(s,t)$-grid-free, then there exist $Y'\subset \PP^s$, a biregular map $\sigma\from Y \to Y'$ and $F(\xx,\yy)\in\CChom[\xx,\yy]$ of degree $\le d$ in $\yy$ for some $d = d(s,t)$ such that $H\cap(X\times Y) = \variety{F\circ(\id\times\sigma)=0}\cap(X\times Y)$.
  \end{conjecture}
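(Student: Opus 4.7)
The plan is to translate the grid-freeness condition into a statement about the base locus of a family of hypersurfaces, and then use birational geometry to reduce the degree. Write the irreducible $H = \variety{F=0}$ with $F\in\CChom[\xx,\yy]$ of bidegree $(d_\xx,d_\yy)$, and for each $\uu\in X$ set $C_\uu := \variety{\vv\in\PP^s : F(\uu,\vv) = 0}$, a hypersurface of degree $d_\yy$ in the $\yy$-copy of $\PP^s$. The $(s,t)$-grid-freeness of $H\cap(X\times Y)$ translates precisely to the statement that, for any $s$ distinct points $\uu_1,\dots,\uu_s\in X$, the intersection $C_{\uu_1}\cap\dots\cap C_{\uu_s}\cap Y$ has fewer than $t$ points.

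By B\'{e}zout, a generic $s$-tuple intersection $C_{\uu_1}\cap\dots\cap C_{\uu_s}$ has degree exactly $d_\yy^s$ counted with multiplicity, so at least $d_\yy^s - (t-1)$ of these intersection points must either lie in $\PP^s\setminus Y$ or fall into the base scheme $B$ of the linear system $\Lambda$ spanned by $\{C_\uu\}_{\uu\in X}$. If $d_\yy$ greatly exceeds the intended $d(s,t)$, then $\Lambda$ must have either a fixed component (essentially ruled out by the irreducibility of $H$) or base points of substantial multiplicity, so the geometry of $\Lambda$ is controlled by a large base scheme $B\subset \PP^s$.

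The heart of the argument is to construct a birational automorphism $\sigma$ of $\PP^s$ that ``resolves away'' $B$ by an iterated blow-up/blow-down. For $s=2$, the Noether--Castelnuovo theorem furnishes the necessary tools: one applies standard quadratic transformations centered at successive base points, reversing the degree-increasing construction of Example~\ref{eg.line}. After sufficiently many such steps, the strict transforms $\sigma_* C_\uu$ are the zero loci of a polynomial of degree $\le d(s,t)$ in $\yy$, which is the promised $F$; the composition $F\circ(\id\times\sigma)$ then recovers the defining equation of $H$ away from the indeterminacy locus of $\sigma$. A biregular map $\sigma\from Y\to Y' := \sigma(Y)$ is obtained by further shrinking $Y$ to avoid this locus.

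The main obstacle I anticipate is uniformity of the bound $d(s,t)$: showing that the number of required Cremona steps, and hence the final degree in $\yy$, depends only on $s$ and $t$, not on $d_\yy$ or on the fine geometry of $H$. This is delicate because $B$ may contain infinitely near points whose multiplicities must be controlled by carefully exploiting the grid-free hypothesis together with irreducibility. In dimensions $s\ge 3$, where $\aut{\PP^s}$ is far less understood, even identifying the right generating transformations is a serious difficulty, which is presumably why the paper settles only the $s=t=2$ case (and with $Y=\PP^2$, where no $\sigma$ is needed at all).
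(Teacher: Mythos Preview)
The statement you are addressing is Conjecture~\ref{strong}, which the paper leaves \emph{open} in general; the paper establishes it only for $s=1$ (Corollary~\ref{c.s=1}) and for $s=t=2$ with $Y=\PP^2$ (Theorem~\ref{main_partial_result}). There is therefore no ``paper's own proof'' of the full statement to compare against, and your text should be read as a proposed strategy toward an open problem rather than as a reconstruction of an existing argument.

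As a strategy, your outline contains a genuine gap at exactly the point you yourself flag: you never supply the mechanism that bounds the number of Cremona steps (hence the final $\yy$-degree) by a function of $(s,t)$ alone. The B\'ezout count shows that large $d_{\yy}$ forces a large base scheme, but ``large base scheme'' does not by itself give a terminating descent under quadratic transformations with a uniform bound on the number of steps---infinitely near base points and their multiplicity profile must be controlled, and you provide no argument for this. Without that, the proposal is a restatement of the conjecture's heuristic rather than a proof.

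There is also a mismatch with the precise statement. Conjecture~\ref{strong} fixes $Y$ in advance and demands a \emph{biregular} $\sigma\from Y\to Y'$; you write that $\sigma$ is obtained ``by further shrinking $Y$ to avoid this locus''. Shrinking $Y$ is not permitted here---that would yield only the weaker Conjecture~\ref{conj} (the ``almost equal'' version). This is exactly the distinction the paper draws between the two conjectures, and it is why the $Y=\PP^2$ case is special: the only biregular self-maps of $\PP^2$ are linear, so no Cremona resolution is available, and the paper instead bounds $\deg_{\yy}$ directly via Moura's theorem on intersection multiplicities (Theorem~\ref{moura}, Corollary~\ref{cmoura}) together with the reducedness lemma (Lemma~\ref{generic_section}). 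That route is quite different from your blow-up/blow-down picture and does not generalize in any obvious way once $Y\subsetneq\PP^2$.
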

  
  We prove Conjecture~\ref{strong} if $s = 1$ and if $s = t = 2, Y = \PP^2$ (see Section~\ref{sec_s=1} and \ref{sec_partial} respectively).
  
  One special case is when $H\cap(\AA^s\times \AA^s)$ is $(s,t)$-grid-free. In this case, $H$ can be seen as an affine algebraic hypersurface in $2s$-dimensional affine space. The group of automorphisms of $\AA^s$, denoted by $\autreg{\AA^s}$, is a subgroup of the Cremona group. In this special case, we make a stronger conjecture.
  
  \begin{conjecture}\label{conjreg}
    Suppose $H$ is an affine hypersurface in $\AA^s\times\AA^s$. If $H$ is $({s,t})$-grid-free, then there exist $\sigma \in \autreg{\AA^s}$ and $F(\xx,\yy)\in\CC[\xx,\yy]$ of degree $\le d$ in $\yy$ for some $d = d(s,t)$ such that $H = \{F\circ(\id\times\sigma) = 0\}$.
  \end{conjecture}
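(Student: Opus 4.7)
The plan is to derive Conjecture~\ref{conjreg} from Conjecture~\ref{strong} by means of a standard geometric argument controlling the image of $\sigma$ at infinity. First I would form the projective closure $\overline{H}\subset\PP^s\times\PP^s$ of $H$, which is irreducible since $H$ is, and observe that the hypothesis that $H$ is $(s,t)$-grid-free matches that of Conjecture~\ref{strong} with $X=Y=\AA^s$. Applying Conjecture~\ref{strong} then supplies a locally closed subvariety $Y'\subset\PP^s$, a biregular map $\sigma\from\AA^s\to Y'$, and a polynomial $F(\xx,\yy)\in\CChom[\xx,\yy]$ of $\yy$-degree at most $d(s,t)$ such that
\[
  H=\variety{F\circ(\id\times\sigma)=0}\cap(\AA^s\times\AA^s).
\]

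The main step will be to show that, after composition with a projective linear transformation, one may assume $Y'=\AA^s$ and $\sigma\in\autreg{\AA^s}$. Since $\dim Y'=s$ and $Y'$ is affine (being biregular to $\AA^s$), it is open in $\PP^s$ with complement a hypersurface $D=D_1\cup\cdots\cup D_k$ of degrees $d_1,\dots,d_k$. The standard exact sequence
\[
  0\to\CC^*\to\mathcal{O}^*(Y')\to\ZZ^k\to\mathrm{Pic}(\PP^s)\to\mathrm{Pic}(Y')\to 0,
\]
combined with $\mathcal{O}^*(\AA^s)=\CC^*$ and $\mathrm{Pic}(\AA^s)=0$, forces the map $\ZZ^k\to\mathrm{Pic}(\PP^s)=\ZZ$ sending $(n_1,\dots,n_k)\mapsto\sum_i n_id_i$ to be an isomorphism. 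Hence $k=1$ and $d_1=1$, so $D$ is a single hyperplane. A projective linear transformation sending $D$ to $\variety{y_0=0}$ identifies $Y'$ with the standard affine chart $\AA^s$ and $\sigma$ with an automorphism of $\AA^s$; absorbing this linear substitution into $F$ preserves $\deg_\yy F$.

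Finally, because $H$ is irreducible and entirely contained in $\AA^s\times\AA^s$, the bihomogeneous polynomial $F$ has neither $x_0$ nor $y_0$ as a factor. The dehomogenization $F_{\mathrm{aff}}(\xx,\yy):=F(1,x_1,\dots,x_s,1,y_1,\dots,y_s)\in\CC[\xx,\yy]$ is therefore not identically zero, has degree at most $d(s,t)$ in $\yy$, and satisfies $H=\variety{F_{\mathrm{aff}}\circ(\id\times\sigma)=0}$, which is the conclusion sought. The principal obstacle to the plan is Conjecture~\ref{strong} itself, which is proven in the paper only in the case $s=1$ and the case $s=t=2$, $Y=\PP^2$; given Conjecture~\ref{strong}, the reduction to Conjecture~\ref{conjreg} is essentially formal modulo the Picard/units computation above.
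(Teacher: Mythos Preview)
The statement you are addressing is Conjecture~\ref{conjreg}, which the paper does \emph{not} prove; it is presented as an open problem. The only case settled in the paper is $s=1$, where it is deduced from Corollary~\ref{c.s=1} with the remark that $\aut{\PP^1}$ consists of projective linear maps, so no nontrivial $\sigma$ is needed. There is thus no ``paper's proof'' against which to compare.

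What you actually give is a reduction showing that Conjecture~\ref{strong} implies Conjecture~\ref{conjreg}. You are explicit about this, and the reduction is essentially sound: passing to the projective closure, invoking Conjecture~\ref{strong} with $X=Y=\AA^s$, and then using $\mathcal{O}^*(\AA^s)=\CC^*$ and $\mathrm{Pic}(\AA^s)=0$ to force the complement of $Y'$ in $\PP^s$ to be a single hyperplane is a clean and standard argument. One small gap: your exact sequence controls only the codimension-$1$ components of $D=\PP^s\setminus Y'$, so a priori $D$ could be a hyperplane $L$ together with some closed set $Z$ of codimension $\ge 2$. To finish, note that then $Y'=(\PP^s\setminus L)\setminus Z\cong\AA^s\setminus Z$, and for nonempty $Z$ of codimension $\ge 2$ this is not affine (Hartogs on the normal variety $\AA^s$ gives $\Gamma(\AA^s\setminus Z,\mathcal{O})=\Gamma(\AA^s,\mathcal{O})$, so the canonical map to its Spec is the non-isomorphic open immersion). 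Hence $Z=\emptyset$ and $Y'$ is genuinely a standard affine chart.

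The paper does not record this implication; it introduces Conjecture~\ref{conjreg} as a separate strengthening in the affine setting rather than as a formal consequence of Conjecture~\ref{strong}. So your observation is a contribution beyond what is written, but it remains conditional: the known cases of Conjecture~\ref{strong} are $s=1$ and $(s,t,Y)=(2,2,\PP^2)$, and only the former feeds into your reduction, recovering exactly the $s=1$ case the paper already claims.
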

  
  \begin{remark}
   An automorphism $\sigma\in\autreg{\AA^s}$ is \emph{elementary} if it has a form \[
    \sigma\from (x_1, \dots, x_{i-1},x_i,x_{i+1},\dots, x_s) \mapsto (x_1, \dots, x_{i-1},cx_i + f, x_{i+1},\dots, x_s),
    \] where $0 \neq c \in \CC, f\in \CC[x_1, \dots, x_{i-1},x_{i+1},\dots, x_s]$. In Example \ref{eg.line}, we used the homogenization of an elementary automorphism to make a counterexample to Conjecture \ref{false}. The \emph{tame subgroup} is the subgroup of $\autreg{\AA^s}$ generated by all the elementary automorphisms, and the elements from this subgroup are called \emph{tame automorphisms}, while non-tame automorphisms are called \emph{wild}. It is known \cite{MR0008915,MR0054574} that all the elements of $\autreg{\AA^2}$ are tame. However, in the case of 3 dimensions, the following automorphism constructed by Nagata (see \cite{nagata1972automorphism}):
  \[
    \sigma(x,y,z) = \left(x+(x^2-yz)z, y+2(x^2-yz)x+(x^2-yz)z,z\right)
  \] was shown \cite{MR2017754,MR2015334} to be wild. See also \cite{MR2654304}. We note that the question on the existence of wild automorphisms remains open for higher dimensions.    
  \end{remark}
    
\section{Results on the $(1,t)$-grid-free case}\label{sec_s=1}
  
  As for the $s=1$ case, one is able to fully characterize $(1,t)$-grid-free hypersurfaces. We always assume that $H$ is a hypersurface in $\PP^1\times\PP^1$ and $X, Y$ are nonempty Zariski-open subsets of $\PP^1$ throughout this section.
  
  \begin{theorem}\label{s=1}
    Suppose $H= \{\tilde{F}=0\}$, where $\tilde{F}(\xx,\yy)\in\CChom[\xx,\yy]$. Let
    \begin{equation}\label{factorF}
      \tilde{F}(\xx,\yy)=f(\xx)g(\yy)h_1(\xx,\yy)^{r_1}h_2(\xx,\yy)^{r_2}\dots h_n(\xx,\yy)^{r_n}
    \end{equation} be the factorization of $\tilde{F}$ such that $h_1, h_2, \dots, h_n$ are distinct irreducible polynomials depending on both $\xx$ and $\yy$. Let $d_i$ be the degree of $h_i$ in $\yy$. Then $H\cap(X\times Y)$ is $(1,t)$-grid-free if and only if $\variety{f=0}\cap X = \emptyset$ and $\abs{\variety{g=0}\cap Y} + d_1 + d_2 + \dots + d_n < t$.
  \end{theorem}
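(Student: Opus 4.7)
The plan is to translate $(1,t)$-grid-freeness into a per-fiber count: $H \cap (X \times Y)$ is $(1,t)$-grid-free iff for every $\uu \in X$ the set $\set{\vv \in Y}{F(\uu, \vv) = 0}$ has fewer than $t$ elements. From $F(\uu, \yy) = f(\uu) g(\yy) \prod_i h_i(\uu, \yy)^{r_i}$, there are two cases: either $f(\uu) = 0$, in which case $F(\uu, \cdot) \equiv 0$ and the fiber is infinite, or $f(\uu) \neq 0$, in which case the distinct zeros of $F(\uu,\cdot)$ in $Y$ are exactly the zeros of $g$ in $Y$ together with the zeros of each $h_i(\uu, \cdot)$ in $Y$; the exponents $r_i$ are irrelevant.

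For the ``if'' direction, assume $\variety{f=0} \cap X = \emptyset$ and $\abs{\variety{g=0}\cap Y} + d_1 + \cdots + d_n < t$. For any $\uu \in X$, $f(\uu) \neq 0$, and each $h_i(\uu, \yy)$ is a nonzero homogeneous polynomial in $\yy$ of degree $d_i$: writing $h_i = \sum_j y_0^{d_i-j} y_1^j p_{ij}(\xx)$, the coefficients $p_{ij}$ share no common factor in $\CC[\xx]$ (otherwise that common factor would divide $h_i$ and contradict irreducibility, since it does not involve $\yy$), hence no common zero in $\PP^1$. So $h_i(\uu, \cdot)$ has at most $d_i$ distinct roots, and a union bound gives $\abs{\set{\vv \in Y}{F(\uu,\vv)=0}} \le \abs{\variety{g=0}\cap Y} + d_1 + \cdots + d_n < t$.

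For the ``only if'' direction, first note that if $\uu_0 \in \variety{f=0}\cap X$ then $\{\uu_0\}\times Y \subset H$, immediately violating $(1,t)$-grid-freeness. So assume $\variety{f=0}\cap X = \emptyset$, and produce a specific $\uu \in X$ at which the bound from the previous paragraph is attained with equality. The strategy is to remove from $X$ finitely many bad points, corresponding to: (i) the ramification locus of the first projection $\variety{h_i=0} \to \PP^1$, a finite surjective map of degree $d_i$ --- surjectivity holds because $\variety{h_i=0}$ is an irreducible curve in $\PP^1 \times \PP^1$ that contains no vertical fiber $\{\uu_0\}\times\PP^1$, by the common-zero argument above; (ii) for each pair $i\neq j$, the finitely many first-coordinates of points in $\variety{h_i=0}\cap \variety{h_j=0}$, a finite set since distinct irreducible curves in $\PP^1\times\PP^1$ meet in finitely many points; (iii) for each $\vv$ in the finite set $\variety{g=0} \cup (\PP^1 \setminus Y)$, the finitely many $\uu$ with $h_i(\uu,\vv) = 0$, which is finite because the symmetric argument shows $h_i(\cdot, \vv)$ is a nonzero polynomial in $\xx$. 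Any $\uu \in X$ outside this finite exceptional set yields exactly $\abs{\variety{g=0}\cap Y} + d_1 + \cdots + d_n$ distinct roots of $F(\uu,\cdot)$ in $Y$, so $(1,t)$-grid-freeness forces this count to be strictly less than $t$.

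The only real work is the genericity analysis in the ``only if'' direction, and it reduces repeatedly to the same observation: an irreducible bihomogeneous polynomial that genuinely involves both $\xx$ and $\yy$ cuts out an irreducible curve in $\PP^1 \times \PP^1$ whose projections to either factor are finite and surjective, so every exceptional locus we need to avoid is a proper closed subset of $\PP^1$, hence finite. No deeper algebraic geometry is required.
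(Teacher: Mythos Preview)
Your proof is correct and follows essentially the same approach as the paper: both reduce to counting distinct roots of $F(\uu,\yy)$ in $Y$ and show this count equals $\abs{\variety{g=0}\cap Y}+d_1+\cdots+d_n$ for generic $\uu$, by excluding the same finite exceptional loci (degenerate fibers of $h_i$, intersections $\{h_i=0\}\cap\{h_j=0\}$, and fibers meeting $\variety{g=0}$ or $\PP^1\setminus Y$). The only stylistic difference is that you phrase these loci geometrically (ramification, curve intersections) while the paper lists them as explicit systems of equations and invokes B\'ezout; your explicit verification that $h_i(\uu,\yy)$ is never identically zero (via coprimality of its $\xx$-coefficients in $\PP^1$) is a detail the paper leaves implicit.
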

  
  \begin{remark}
    In Theorem~\ref{s=1}, note that $f(\xx) \in \CChom[\xx]$, $g(\yy)\in \CChom[\yy]$ and $h_i(\xx,\yy)\in\CChom[\xx,\yy]$ for all $i = 1, 2, \dots, n$.
  \end{remark}
  
  \begin{proof}
    Clearly, if $H\cap(X\times Y)$ is $(1,t)$-grid-free, then $\variety{f=0}\cap X$ is empty. For every $\uu\in \PP^1$, consider the following $n+n+n+n+\binom{n}{2}$ systems of (one or two) inequalities and/or equations in $\yy$:
    \begin{subequations}
      \begin{align}
        \deg h_i(\uu,\yy) < d_i, & \quad i = 1, 2, \dots, n; \label{subeqa} \\
        \yy\in \PP^1\setminus Y\text{ and }h_i(\uu,\yy)= 0, & \quad i = 1, 2, \dots, n; \label{subeqb} \\
        h_i(\uu,\yy) = \partial_yh_i(\uu,\yy) = 0, & \quad i = 1, 2, \dots, n; \label{subeqc} \\
        h_i(\uu,\yy) = g(\yy) = 0, & \quad i = 1, 2, \dots, n; \label{subeqd} \\
        h_i(\uu,\yy) = h_j(\uu,\yy) = 0, & \quad  i \neq j. \label{subeqe}
      \end{align}      
    \end{subequations}
    
    We claim that each of these systems has no solution in $\PP^1$ for a generic $\uu$. Clearly, this is so for (\ref{subeqa}, \ref{subeqb}). For (\ref{subeqc}, \ref{subeqd}, \ref{subeqe}), the claim follows from the following fact: if $p_1(\xx, \yy), p_2(\xx, \yy) \in \CChom[\xx, \yy]$ are relatively prime in the unique factorization domain $\CChom[\xx,\yy]$, then 
    \begin{equation}\label{bezout}
      p_1(\uu, \yy) = p_2(\uu, \yy) = 0      
    \end{equation}
    has no solution in $\PP^1$ for a generic $\uu$. Indeed, notice that $p_1(\xx, \yy), p_2(\xx, \yy)$ are also relatively prime in the Euclidean domain $\CC(\xx)[\yy]$. By the Euclidean algorithm, there are $q_1(\xx, \yy), q_2(\xx, \yy) \in \CC(\xx)[\yy]$ such that $p_1q_1 + p_2q_2 = 1$. Let $q\in\CC[\xx]$ be the common denominator of $q_1$ and $q_2$. We then have the equality $p_1\tilde{q}_1 + p_2\tilde{q}_2 = q$, where $\tilde{q}_i = q_iq \in \CC[\xx][\yy]$ for $i \in [2]$. Thus for all $\uu$ such that $q(\uu)\neq 0$, \eqref{bezout} has no solution.
    
    So for a generic $\uu\in\PP^1$, $\tilde{F}(\uu,\yy) = 0$ has exactly $M := \abs{\variety{g=0}\cap Y} + d_1 + d_2 + \dots + d_n$ distinct solutions in $Y$. The conclusion follows as $M$ is the maximal number of distinct solutions.
  \end{proof}
  
  The informal conjecture thus holds when $s=1$ as Theorem \ref{s=1} implies:
  
  \begin{corollary}\label{c.s=1}
    If $H\cap (X\times Y)$ is $(1,t)$-grid-free, then there exists $F(\xx,\yy)\in\CChom[\xx,\yy]$ of degree $< t$ in $\yy$ such that $H\cap(X\times Y) = \variety{F=0}\cap(X\times Y)$.
  \end{corollary}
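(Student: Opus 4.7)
The plan is to deduce the corollary directly from Theorem~\ref{s=1} by taking any defining equation of $H$ and pruning it down. First, write $H = \variety{F = 0}$ for some $F \in \CChom[\xx,\yy]$, factor $F$ as in \eqref{factorF}, and apply Theorem~\ref{s=1} to obtain both $\variety{f=0}\cap X = \emptyset$ and $\abs{\variety{g=0}\cap Y} + d_1 + \cdots + d_n < t$.

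Next I would construct the desired polynomial $F^*$ as follows: discard the pure-$\xx$ factor $f$ entirely, keep each mixed irreducible factor $h_i$ with multiplicity one, and replace $g(\yy)$ by the homogeneous polynomial $g^*(\yy)$ whose linear factors are exactly those linear forms of $g$ vanishing at the points of $\variety{g=0}\cap Y$. Since $\yy$ has only two coordinates, $g$ splits into linear forms over $\CC$, so $g^*$ is well-defined and separately homogeneous. Setting $F^* := g^*(\yy)\, h_1(\xx,\yy)\cdots h_n(\xx,\yy)$, one reads off that $\deg_{\yy} F^* = \abs{\variety{g=0}\cap Y} + d_1 + \cdots + d_n < t$.

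It then remains to verify $\variety{F^*=0}\cap(X\times Y) = H \cap (X\times Y)$, which I expect to be routine bookkeeping: dropping $f$ is harmless since $\variety{f=0} \cap X = \emptyset$; replacing each $h_i^{r_i}$ by $h_i$ does not change the zero set; and swapping $g$ for $g^*$ only removes zeros of the form $X \times \{\alpha\}$ with $\alpha \notin Y$, all of which already lie outside $X \times Y$. Since all the substantive content is carried by Theorem~\ref{s=1}, I do not anticipate any genuine obstacle here; the only mild point worth noting is that irreducible factors of a bihomogeneous polynomial are themselves bihomogeneous, which is standard and justifies treating $f$, $g$, and the $h_i$ as separately homogeneous in the first place.
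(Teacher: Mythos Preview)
Your proposal is correct and follows essentially the same route as the paper: factor $F$, invoke Theorem~\ref{s=1}, drop $f$, reduce each $h_i^{r_i}$ to $h_i$, and replace $g$ by the product of one linear form per point of $\variety{g=0}\cap Y$. Your $F^*$ is exactly the paper's $\tilde F$, and the additional bookkeeping you spell out (why each pruning step preserves the zero set on $X\times Y$, bihomogeneity of the factors) is just the verification the paper summarizes with ``Clearly, $\tilde F = F$ on $X\times Y$.''
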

  
  \begin{proof}
    Let $H = \{\tilde{F} = 0\}$, and let $f, g$ and $h_1, h_2,\dots, h_n$ be the factors of $\tilde{F}$ as in \eqref{factorF}. Suppose $m := \abs{\variety{g=0}\cap Y}$ and $\variety{g=0}\cap Y = \{\vv_1, \vv_2,\dots,\vv_m\}$. Let $g_i(\yy)\in\CChom[\yy]$ be linear such that $\variety{g_i = 0}=\{\vv_i\}$.
    By Theorem~\ref{s=1}, $F(\xx,\yy) := g_1(\yy)g_2(\yy)\dots g_m(\yy)h_1(\xx,\yy)h_2(\xx,\yy)\dots h_n(\xx,\yy)$ is of degree $m + d_1 + d_2 + \dots d_n < t$ in $\yy$. Clearly, the zeros of $F$ in $X\times Y$ agree with $\tilde{F}$.
  \end{proof}
  
  Conjectures~\ref{conj}, \ref{strong} and \ref{conjreg} follow from the corollary in the $s=1$ case. The biregular map $\sigma$ becomes trivial in these conjectures since $\aut{\PP^1}$ consists only of projective linear transformations.
  
\section{Results on the $(2,2)$-grid-free case}\label{sec_partial}

Throughout the section we assume that $H$ is a hypersurface in $\PP^2\times\PP^2$ and $X$ is a nonempty Zariski-open subset of $\PP^2$.

  \begin{theorem}\label{main_partial_result}
    If $H\cap(X\times \PP^2)$ is $(2,2)$-grid-free, then there exists $F(\xx,\yy)\in\CChom[\xx,\yy]$ of degree $\le 2$ in $\yy$ such that $H\cap(X\times \PP^2) = \{F=0\}\cap(X\times \PP^2)$.
  \end{theorem}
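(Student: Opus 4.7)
My plan is to analyze the family of plane curves $C_\uu := \{\yy \in \PP^2 : F(\uu, \yy) = 0\} \subset \PP^2_\yy$ parameterized by $\uu \in X$, of degree $d := \deg_\yy F$. First, by replacing $F$ with an equivalent polynomial having the same zero set on $X \times \PP^2$, I may assume $F$ is square-free, has no factor $f(\xx) \in \CChom[\xx]$ whose zero locus meets $X$ (such a factor would force $C_\uu = \PP^2$ for some $\uu \in X$, producing a $(2,2)$-grid), and has no factor $g(\yy) \in \CChom[\yy]$ (its zero locus would be common to every $C_\uu$, again producing a $(2,2)$-grid). Thus $F = \prod_i F_i$ with each $F_i$ irreducible and depending nontrivially on both $\xx$ and $\yy$, and the goal reduces to bounding $d$.

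For any distinct $\uu_1, \uu_2 \in X$, $(2,2)$-grid-freeness forbids $C_{\uu_1}$ and $C_{\uu_2}$ from sharing any component, so B\'ezout gives $C_{\uu_1} \cdot C_{\uu_2} = d^2$ counting multiplicities, concentrated set-theoretically at a single point $P = P(\uu_1, \uu_2)$ with intersection multiplicity $d^2$. The geometric heart of the proof is to show that for $d \geq 3$, each $C_\uu$ must have multiplicity $d$ at a common fixed point $P_* \in \PP^2_\yy$---which means $C_\uu$ is a union of $d$ concurrent lines through $P_*$. The argument uses the inequality $i_P \geq \operatorname{mult}_P(C_1)\cdot \operatorname{mult}_P(C_2)$ together with $\operatorname{mult}_P C \leq d$ and a tangent-cone analysis to rule out the case where $P(\uu_1, \uu_2)$ varies over $C_{\uu_1}$ as $\uu_2$ varies (which would force impossibly high-order tangency at many points of $C_{\uu_1}$). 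Once $P_*$ is fixed, choosing coordinates with $P_* = [0:0:1]$ forces $F \in \CChom[\xx, y_0, y_1]$, i.e., $F$ has no $y_2$-dependence.

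With $F$ in this reduced form, I parameterize lines through $P_*$ by $[a:b] \in \PP^1$ via $\ell_{[a:b]} = \{a y_0 + b y_1 = 0\}$ and set $L_{[a:b]} := \{\uu : F(\uu, -b, a) = 0\} \subset \PP^2_\uu$; a point $\uu$ lies on $L_{[a:b]}$ iff $\ell_{[a:b]}$ is a component of $C_\uu$. Two distinct points of $X$ in a common $L_{[a:b]}$ would produce curves sharing a line, hence infinitely many common points, a $(2,2)$-grid. Thus each $L_{[a:b]} \cap X$ contains at most one point, and since $L_{[a:b]}$ is a codimension-one curve in $\PP^2_\uu$ while $X$ is Zariski open, $L_{[a:b]} \cap X$ must be empty---i.e., $L_{[a:b]} \subset \PP^2_\uu \setminus X$. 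But every generic $\uu \in \PP^2_\uu$ lies on $L_{[a:b]}$ for each $[a:b]$ corresponding to a linear factor of $F(\uu, y_0, y_1)$, so $\bigcup_{[a:b]} L_{[a:b]}$ is Zariski dense in $\PP^2_\uu$, forcing $X = \emptyset$---a contradiction. Hence $d \leq 2$. The main technical obstacle is the step establishing the fixed base point $P_*$ in the second paragraph: the inequality $i_P \geq \operatorname{mult}_P(C_1)\cdot \operatorname{mult}_P(C_2)$ is not tight, so $i_P = d^2$ does not immediately force $\operatorname{mult}_P C_\uu = d$; ruling out the variable-$P$ alternative (where curves have high-order contact at many points) via a tangent-cone analysis or via the dual family $D_\yy := \{\uu : F(\uu, \yy) = 0\} \subset \PP^2_\uu$ is where most of the work lies, and the case $d = 2$ sits precisely at the boundary of this analysis, consistent with the theorem's bound.
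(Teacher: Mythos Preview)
Your initial reductions (eliminating pure-$\xx$ and pure-$\yy$ factors, passing to square-free $F$) and your observation that distinct sections $C_{\uu_1},C_{\uu_2}$ share no component are fine, and the set-up ``every pair of sections meets in a single point of multiplicity $d^2$'' is correct. The gap is exactly where you say it is, and it is fatal as written: you assert that for $d\ge 3$ one can force $\operatorname{mult}_{P}C_{\uu}=d$ at a common point $P_*$, but you give no argument, and the inequality $I_P(C_1,C_2)\ge \operatorname{mult}_P(C_1)\operatorname{mult}_P(C_2)$ is far from sufficient (two smooth cubics can meet at a single point with $I_P=9$ while both have multiplicity $1$ there). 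Nothing in your sketch rules this out. In fact the structure you aim for---each $C_\uu$ a union of $d$ concurrent lines through a fixed $P_*$---is itself immediately impossible for $d\ge 2$: a generic point $\vv$ on such a $C_{\uu_1}$ lies on a single line $\ell$ of $C_{\uu_1}$, and any degree-$d$ curve $C_{\uu_2}$ satisfies $I_\vv(C_{\uu_1},C_{\uu_2})=I_\vv(\ell,C_{\uu_2})\le d<d^2$, so no $C_{\uu_2}$ can meet $C_{\uu_1}$ only at $\vv$. Thus the endgame with the loci $L_{[a:b]}$ is never reached; the real work is entirely in the step you left open.

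The paper fills precisely this gap by invoking a theorem of Moura: for a \emph{generic} point $\vv$ on an irreducible plane curve $C$ of degree $d$, the maximal intersection multiplicity at $\vv$ with any curve of degree $d$ not containing $C$ is $\tfrac12(d^2+3d)-1<d^2$ once $d\ge 3$. Combined with the (easy) observation that \emph{every} point $\vv\in C_{\uu}$ is the unique intersection point with some $C_{\uu'}$ (because $\{\uu':\vv\in C_{\uu'}\}\cap X$ is either empty or infinite), this forces each generic section $C_{\uu}$ to be irreducible of degree $\le 2$; a separate lemma on genericity of reduced sections then bounds $\deg_{\yy}F$. Your proposed route, if completed, would essentially have to reprove Moura's bound (the ``tangent-cone analysis'' you allude to), so it is not a genuine alternative so much as an unfinished version of the same idea.
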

  
  The theorem resolves Conjecture~\ref{strong} for $s = t = 2, Y = \PP^2$. Note that the biregular map $\sigma\from Y\to Y'$ in the conjecture does not appear in the theorem. The reason is as follows. In the restricted setting $Y = \PP^2$, since every biregular map defined on $\PP^2$ has to be a biregular automorphism of $\PP^2$ and every biregular automorphism of $\PP^2$ is a linear transformation, the biregular map $\sigma$ is linear, hence $F\circ(\id \times \sigma)$ would be of the same degree as $F$.
  
  Our argument uses a reduction to an intersection problem of plane algebraic curves. The key ingredient is a theorem by Moura \cite{MR2019943} on the intersection multiplicity of plane algebraic curves.

  \begin{theorem}[Moura \cite{MR2019943}]\label{moura}
    Denote by $I_{\vv}(C_1, C_2)$ the intersection multiplicity of algebraic curves $C_1$ and $C_2$ at $\vv$. 
    For a generic point $\vv$ on an irreducible algebraic curve $C_1$ of degree $d_1$ in $\PP^2$,
    \[
      \max_{C_2}\set{I_{\vv}(C_1, C_2)}{C_1 \not\subset C_2, \deg C_2 \le d_2} = \begin{cases}
        \frac{1}{2}(d_2^2+3d_2) & \text{if }d_1 > d_2; \\
        d_1d_2 - \frac{1}{2}(d_1^2-3d_1+2) & \text{if }d_1 \le d_2.
      \end{cases}
    \]
  \end{theorem}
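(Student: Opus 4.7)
The plan is to bound both sides using a dimension count on the restriction of degree-$d_2$ polynomials to $C_1$, combined with the classical non-vanishing of the Wronskian of linearly independent sections in characteristic zero. Since $C_1$ is irreducible, a generic $\vv \in C_1$ is smooth, so I restrict attention to such $\vv$. Let $F$ be a defining equation of $C_1$ and set $V := H^0(\PP^2, \mathcal{O}(d_2))$, of dimension $N := \binom{d_2+2}{2}$. With $L := \mathcal{O}_{C_1}(d_2)$, the restriction map $\phi \from V \to H^0(C_1, L)$ has kernel $F \cdot H^0(\PP^2, \mathcal{O}(d_2-d_1))$, of dimension $\binom{d_2-d_1+2}{2}$ when $d_2 \ge d_1$ and zero otherwise. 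Put $W := \phi(V)$ and $M := \dim W$. A short calculation identifies $M-1$ with the claimed answer in both branches: when $d_1 > d_2$ the kernel is trivial and $M-1 = \binom{d_2+2}{2} - 1 = \frac{1}{2}(d_2^2+3d_2)$, while when $d_1 \le d_2$ one has $M = d_1 d_2 - \frac{1}{2}(d_1^2-3d_1)$ and hence $M - 1 = d_1 d_2 - \frac{1}{2}(d_1^2-3d_1+2)$.

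For the achievability ($\ge$) direction, since $\vv$ is a smooth point of $C_1$, vanishing of $G|_{C_1}$ to order $\ge k$ at $\vv$ imposes at most $k$ linear conditions on $G \in V$; hence the subspace $V_{\vv,k} \subset V$ of such polynomials has dimension at least $N-k$. Because $\ker \phi \subseteq V_{\vv,k}$ (any multiple of $F$ restricts to zero on $C_1$), the image $\phi(V_{\vv,k})$ has dimension at least $M-k$, which is positive for $k = M-1$. Any nonzero preimage yields a $G$ with $F \nmid G$ realizing $I_{\vv}(C_1, \variety{G = 0}) \ge M-1$; note that no further genericity of $\vv$ is required here.

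For the upper bound ($\le$), the key tool is the Wronskian. Choose a basis $s_1, \dots, s_M$ of $W$ and pull back to the normalization $\pi \from \tilde{C}_1 \to C_1$. Since $\pi$ is birational, the pullbacks $\pi^* s_1, \dots, \pi^* s_M$ remain linearly independent sections of $\pi^* L$ on the smooth curve $\tilde{C}_1$. In characteristic zero, their Wronskian is a nonzero global section of $(\pi^* L)^{\otimes M} \otimes K_{\tilde{C}_1}^{\otimes \binom{M}{2}}$; this is the classical consequence of linear independence of sections. The Wronskian therefore vanishes on only a finite subset of $\tilde{C}_1$, and at any smooth $\vv \in C_1$ whose preimage avoids that subset no nonzero element of $W$ can vanish to order $\ge M$ at $\vv$. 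This gives $I_{\vv}(C_1, \variety{G = 0}) \le M - 1$ for every admissible $G$.

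The main delicacy, I expect, lies in the passage to the normalization: one must verify both that the $\pi^* s_i$ stay linearly independent (immediate because $\pi^*$ is injective on sections, $\pi$ being dominant between integral schemes) and that the order of vanishing of $G|_{C_1}$ at a smooth $\vv$ matches that of the pullback at the unique preimage (immediate because $\pi$ is an isomorphism above the smooth locus of $C_1$). A secondary bookkeeping task is the algebraic identity $\binom{d_2+2}{2} - \binom{d_2-d_1+2}{2} = d_1 d_2 - \frac{1}{2}(d_1^2 - 3d_1)$ (with the convention that the second binomial vanishes when $d_2 < d_1$), from which the two branches of the formula for $M - 1$ drop out immediately.
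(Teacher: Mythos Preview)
The paper does not prove this statement: Theorem~\ref{moura} is quoted from Moura \cite{MR2019943} and used as a black box to deduce Corollary~\ref{cmoura}, so there is no proof in the paper to compare against.

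That said, your argument is correct and is essentially the standard proof via the theory of Weierstrass (inflection) points of a linear series. The identification of $I_{\vv}(C_1,\{G=0\})$ with the order of vanishing of $G|_{C_1}$ at $\vv$ is valid precisely because $\vv$ is smooth on $C_1$ (so $\mathcal{O}_{C_1,\vv}$ is a DVR), and the reduction from $\deg C_2 \le d_2$ to $\deg C_2 = d_2$ is harmless since multiplying by a form not vanishing at $\vv$ leaves the local multiplicity unchanged. The lower bound is a pure dimension count, and the upper bound is exactly the statement that a generic point of $\tilde{C}_1$ is an ordinary point of the linear series $W$, which the nonvanishing of the Wronskian guarantees in characteristic zero. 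Your handling of the normalization is fine: $\pi$ is an isomorphism over the smooth locus, so orders of vanishing match, and $\pi^*$ is injective on global sections. The binomial identity you record is correct and yields both branches of the formula.
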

  
  \begin{corollary}\label{cmoura}
    For a generic point $\vv$ on an algebraic curve $C$ in $\PP^2$, any algebraic curve $C'$ with $\vv\in C'$ intersects with $C$ at another point unless $C$ is irreducible of degree $\le 2$.
  \end{corollary}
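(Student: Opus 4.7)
The plan is to argue by contrapositive: suppose $C$ is either reducible, or irreducible of degree at least $3$, and show that for a generic $\vv \in C$, every algebraic curve $C'$ passing through $\vv$ meets $C$ at a second point.

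First I would dispatch the reducible case. Write $C = C_1 \cup \cdots \cup C_n$ with $n \ge 2$ irreducible components. For a generic $\vv \in C$ the point $\vv$ lies on exactly one component, say $C_1$, and on none of the others. For any curve $C'$ through $\vv$, look at the component $C_2$: either $C_2 \subset C'$, in which case $C \cap C'$ already contains the infinite set $C_2$; or $C_2 \not\subset C'$, in which case Bezout's theorem forces $C_2 \cap C' \neq \emptyset$. Either way, a point of $C_2$ lies in $C \cap C'$ and is distinct from $\vv$.

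The main case is $C$ irreducible of degree $d_1 \ge 3$. Let $C'$ be a curve of degree $d_2$ through $\vv$. If $C \subset C'$, then $C \cap C' = C$ is infinite, so assume $C \not\subset C'$. Bezout's theorem gives $\sum_{p} I_p(C, C') = d_1 d_2$, so the hypothesis $C \cap C' = \{\vv\}$ would force $I_\vv(C, C') = d_1 d_2$. By Theorem~\ref{moura}, for generic $\vv \in C$ the multiplicity $I_\vv(C, C')$ is bounded by the Moura maximum $M(d_1, d_2)$. A routine check shows $M(d_1, d_2) < d_1 d_2$ whenever $d_1 \ge 3$: in the regime $d_1 > d_2$ the inequality $\tfrac{1}{2}(d_2^2 + 3 d_2) < d_1 d_2$ reduces to $d_2 < 2 d_1 - 3$, which holds because $d_2 \le d_1 - 1$; in the regime $d_1 \le d_2$ it reduces to $(d_1 - 1)(d_1 - 2) > 0$. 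This contradicts $I_\vv(C, C') = d_1 d_2$, so $C'$ must meet $C$ at a point other than $\vv$.

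The one point I expect to require care is a quantifier subtlety: Moura's generic $\vv$ depends on $d_2$, whereas the corollary wants a single generic $\vv$ working for every $C'$. Since, for each $d_2$, Moura's exceptional locus is a proper Zariski-closed (hence finite) subset of the irreducible curve $C$, a very general $\vv$ over $\CC$---one outside the countable union of these finite sets---handles all degrees at once. Beyond this bookkeeping, the proof is a direct combination of Bezout's theorem with the elementary inequality above.
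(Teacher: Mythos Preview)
Your proof is correct and follows essentially the same route as the paper: handle the reducible case by intersecting $C'$ with a second component, and in the irreducible case combine B\'ezout with Moura's bound to force $I_\vv(C,C') < d_1 d_2$ when $d_1 \ge 3$. You are in fact more careful than the paper on two points---you explicitly verify the inequality in both regimes, and you flag (and resolve via a very-general-point argument) the quantifier dependence of Moura's generic locus on $d_2$, which the paper glosses over.
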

  
  \begin{proof}
    Suppose $C$ has more than one irreducible components. Let $C_1$ and $C_2$ be any two of them. Since $C_1 \cap C_2$ is finite, we can pick a generic point $\vv$ on $C_1 \setminus C_2$. Now any algebraic curve $C'$ containing $\vv$ intersects $C$ at another point on $C_2$. So, $C$ is irreducible.
    
    Let $d$ and $d'$ be the degrees of $C$ and $C'$ respectively. By Theorem~\ref{moura}, one can check that $I_{\vv}(C, C') < dd'$ for a generic point $\vv\in C$ for all $d > 2$. Since B\'{e}zout's theorem states that the total intersection multiplicities of $C$ and $C'$ at their common points is equal to $dd'$, we deduce that $C$ intersects $C'$ at another point unless $d\le 2$.
  \end{proof}
  
  In our proof of Theorem~\ref{main_partial_result}, we think of $H$ as a family of algebraic curves in $\PP^2$, each of which is indexed by $\uu\in \PP^2$ and is defined by $C(\uu) := \set{\vv\in\PP^2}{(\uu,\vv)\in H}$. We call algebraic curve $C(\uu)$ the \emph{section} of $H$ at $\uu$. The set $H \cap (X\times \PP^2)$ is $(2,2)$-grid-free
if and only if $C(\uu)$ and $C(\uu')$ intersect in at most one point for all distinct $\uu, \uu'\in X$. 
  The last piece that we need for our proof is a technical lemma on generic sections of irreducible hypersurfaces.
  
  \begin{lemma}\label{generic_section}
    Suppose $H_1$ and $H_2$ are two different irreducible hypersurfaces in $\PP^2\times \PP^2$ defined by $h_1(\xx,\yy), h_2(\xx,\yy) \in \CChom[\xx,\yy]\setminus(\CChom[\xx]\cup \CChom[\yy])$ respectively. Denote the section of $H_i$ at $\uu$ by $C_i(\uu)$ for $i = 1,2$. For generic $\uu\in \PP^2$, $C_1(\uu)$ and $C_2(\uu)$ share no common irreducible components, and moreover, each $C_i(\uu)$ is a reduced\footnote{The algebraic curve $C_i(\uu)$ is reduced in the sense that its defining equation $h_i(\uu, \yy)$ is square-free.} algebraic curve.
  \end{lemma}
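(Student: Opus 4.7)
My plan is to prove the two assertions separately on a common dense open subset.

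For the first, I use a dimension count. Since $H_1\neq H_2$ are distinct irreducible hypersurfaces in the $4$-dimensional $\PP^2\times\PP^2$, $\dim(H_1\cap H_2)\le 2$. Decomposing $H_1\cap H_2$ into irreducible components and applying the fiber dimension theorem to each restricted projection onto the first factor, I obtain a dense open $U'\subset\PP^2$ over which every fiber of $\pi\from H_1\cap H_2\to\PP^2$ is finite (possibly empty): a component with dominant projection has dimension exactly $2$ and hence generic fiber of dimension $0$, while a non-dominant component contributes empty fiber over the complement of its image. Consequently $C_1(\uu)\cap C_2(\uu)=\pi^{-1}(\uu)$ is finite for $\uu\in U'$, which by B\'ezout rules out any common irreducible component of $C_1(\uu)$ and $C_2(\uu)$.

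For the second assertion I fix $i\in\{1,2\}$ and suppress the subscript. Since $h$ depends on $\yy$, pick $j\in\{0,1,2\}$ with $\deg_{y_j}h\ge 1$ and set $\{k,l\}=\{0,1,2\}\setminus\{j\}$. Because $h$ is irreducible in $\CC[\xx,\yy]$ (and therefore primitive as a polynomial in $\yy$ over $\CC[\xx]$), Gauss's lemma shows $h$ is irreducible, hence square-free, in $\CC(\xx)[\yy]$; in characteristic zero this gives $\gcd(h,\partial h/\partial y_j)=1$ in $\CC(\xx)[\yy]$, so the resultant $R_j(\xx,y_k,y_l):=\operatorname{Res}_{y_j}(h,\partial h/\partial y_j)$ is a nonzero element of $\CC[\xx,y_k,y_l]$. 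Let $U_j\subset\PP^2$ denote the set of $\uu$ for which neither the leading coefficient of $h(\uu,\yy)$ in $y_j$ nor $R_j(\uu,y_k,y_l)$ vanishes identically as a polynomial in $(y_k,y_l)$. On $U_j$ specialization commutes with the resultant, so $h(\uu,\yy)$ and $\partial h(\uu,\yy)/\partial y_j$ are coprime in $\CC(y_k,y_l)[y_j]$; consequently $h(\uu,\yy)$ has no repeated irreducible factor depending on $y_j$.

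Taking $U'':=\bigcap_{j:\,\deg_{y_j}h\ge 1}U_j$, any repeated irreducible factor of $h(\uu,\yy)$ for $\uu\in U''$ would necessarily be nonconstant and involve some $y_j$ actually appearing in $h$, contradicting the construction of $U_j$. Hence $h(\uu,\yy)$ is square-free on $U''$, i.e., $C_i(\uu)$ is reduced there; the lemma then holds on $U'\cap U''$. The main obstacle is the bookkeeping in the bi-homogeneous setting: $R_j$ is separately homogeneous in $\xx$ and in $(y_k,y_l)$, so the condition ``$R_j(\uu,\cdot)\not\equiv 0$'' must be interpreted as the non-vanishing of \emph{all} coefficients of $R_j$ viewed as a polynomial in $(y_k,y_l)$. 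This cuts out a proper Zariski-closed subset of $\PP^2$ precisely because $R_j\ne 0$, and the analogous remark applies to the leading-coefficient condition.
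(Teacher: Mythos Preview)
Your proof is correct, but the route for the first assertion is genuinely different from the paper's. The paper treats both assertions with a single linear-algebra/resultant mechanism: assuming $h_1(\uu,\yy)$ and $h_2(\uu,\yy)$ share a factor for generic $\uu$, it writes the relation $h_1 g_1^{\uu}+h_2 g_2^{\uu}=0$ as a homogeneous linear system in the coefficients of $g_1^{\uu},g_2^{\uu}$, observes that solvability forces all $N\times N$ minors of the coefficient matrix (polynomials in $\uu$) to vanish identically, and hence obtains a relation $h_1 g_1+h_2 g_2=0$ over $\CC(\xx)$, contradicting $\gcd(h_1,h_2)=1$. The reducedness assertion is then handled by the identical argument with $(h_1,h_2)$ replaced by $(h_i,\partial h_i/\partial y_0)$. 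Your dimension/fiber argument for the first part is cleaner and more geometric; the paper's approach has the virtue of uniformity (one mechanism for both claims) and of being entirely explicit at the level of polynomial identities, which fits the paper's later transfer to positive characteristic in Section~\ref{sec_finite_char}. Your treatment of the second assertion via $\operatorname{Res}_{y_j}$ is essentially the paper's argument recast in resultant language.

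One small imprecision: from ``$h$ is square-free in $\CC(\xx)[\yy]$'' alone you cannot conclude $\gcd(h,\partial h/\partial y_j)=1$ in $\CC(\xx)[\yy]$ (think of $y_0y_1$). What actually makes your conclusion valid is the stronger fact you already established, namely that $h$ is \emph{irreducible} in $\CC(\xx)[\yy]$: then any common divisor is $1$ or $h$, and $h\nmid\partial h/\partial y_j$ by degree. Equivalently, one can argue directly in $\CC[\xx,\yy]$ that an irreducible $h$ with $\deg_{y_j}h\ge1$ shares no factor of positive $y_j$-degree with $\partial h/\partial y_j$, whence $R_j\ne0$. This does not affect the validity of your argument, only the justification of that one step.
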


  \begin{proof}[Proof of Theorem~\ref{main_partial_result} assuming Lemma~\ref{generic_section}]
    Suppose $H\cap(X\times \PP^2)$ is $(2,2)$-grid-free. Take an arbitrary $\uu\in X$ and consider algebraic curve $C(\uu)$ in $\PP^2$. We claim that every $\vv\in C(\uu)$ is an intersection of $C(\uu)$ and $C(\uu')$ for some $\uu'\in X\setminus\{\uu\}$. Define $D(\vv) := \set{\uu \in X}{(\uu, \vv)\in H}$. Since $\PP^2\setminus X$ is Zariski-closed, the set $D(\vv)$ is either empty or infinite.  However, $\uu\in D(\vv)$ and the claim is equivalent to $\abs{D(\vv)} \ge 2$.
    
    Now pick a generic $\vv\in C(\uu)$. We know that point $\vv$ is an intersection of $C(\uu)$ and $C(\uu')$ for some $\uu'\in X\setminus \{\uu\}$ and it is the only intersection because $H\cap(X\times \PP^2)$ is $(2,2)$-grid-free. We apply Corollary~\ref{cmoura} to $C(\uu)$ and $C(\uu')$ and get that $C(\uu)$ is irreducible of degree $\le 2$.
    
    Suppose $H$ is defined by $\tilde{F}(\xx,\yy)\in\CChom[\xx,\yy]$ and 
    \begin{equation}\label{factor2}
      \tilde{F}(\xx,\yy) = f(\xx)g(\yy)h_1(\xx,\yy)^{r_1}h_2(\xx,\yy)^{r_2}\dots h_n(\xx,\yy)^{r_n}
    \end{equation}
    is the factorization of $\tilde{F}$ such that $h_1, h_2, \dots, h_n$ are distinct irreducible polynomials in $\CChom[\xx,\yy]\setminus(\CChom[\xx]\cup\CChom[\yy])$. The set $\{f=0\}\cap X$ is either empty or infinite. So, for $H\cap(X\times \PP^2)$ to be $(2,2)$-grid-free we must have $\{f=0\}\cap X=\emptyset$. Similarly, we know that $\variety{g = 0} = \emptyset$, that is, $g(\yy)$ is a nonzero constant. These imply that the zeros of $F(\xx, \yy) := h_1(\xx, \yy)h_2(\xx, \yy)\dots h_n(\xx,\yy)$ in $X\times \PP^2$ agree with $\tilde{F}$.

    Let $C_i(\uu)$ be the section of $H_i := \{h_i = 0\}$ at $\uu$ for $i = 1, 2, \dots, n$. From Lemma~\ref{generic_section}, we know that, for a generic $\uu\in \PP^2$, $C_i(\uu)$ and $C_j(\uu)$ have no common irreducible components for all $i\neq j$. Therefore $C(\uu) = \cup_{i=1}^n C_i(\uu)$ has at least $n$ irreducible components, and so $n = 1$. Now $C(\uu) = C_1(\uu) = \{h_1(\uu,\yy) = 0\}$ for all $\uu\in \PP^2$. By Lemma~\ref{generic_section}, $h_1(\uu, \yy)$ is square-free for generic $\uu\in \PP^2$. This and the fact that for arbitrary $\uu\in X$, hence for a generic $\uu\in \PP^2$, $C(\uu)$ is irreducible of degree $\le 2$ imply that $\deg h_1(\uu,\yy) \le 2$ for a generic $\uu\in \PP^2$, and so $\deg_{\yy} h_1(\xx,\yy) \le 2$.
  \end{proof}
  
  \begin{remark}\label{key_step}
    The key step in the above proof shows the following. If $H\cap(X\times \PP^2)$ is $(2,2)$-grid-free and its defining equation $\tilde{F}(\xx, \yy)\in\CChom[\xx,\yy]$ is factorized as in \eqref{factor2}, then $n = 1$ and $\deg_{\yy}h_1(\xx, \yy) \le 2$.
  \end{remark}
  
  \begin{proof}[Proof of Lemma~\ref{generic_section}]
    Let $d_1, d_2$ be the degrees of $h_1, h_2$ in $\yy$ respectively. Suppose on the contrary that $C_1(\uu)$ and $C_2(\uu)$ share common irreducible components for a generic $\uu\in\PP^2$. So, $h_1(\uu, \yy)$ and $h_2(\uu, \yy)$ have a common divisor in $\CC[\yy]$. Therefore there exist two nonzero polynomials $g_1^{\uu}(\yy) \in \CChom[\yy]$ of degree $<d_2$ and $g_2^{\uu}(\yy) \in \CChom[\yy]$ of degree $<d_1$ such that \begin{equation}\label{res}
      h_1(\uu,\yy)g_1^{\uu}(\yy) + h_2(\uu, \yy)g_2^{\uu}(\yy) = 0.
    \end{equation}
    
    By treating the coefficients of $g_1^{\uu}(\yy)$ and $g_2^{\uu}(\yy)$ as variables, we shall essentially show a nontrivial solution to equation~\eqref{res} exists not only in $\CC$ but also in $\CC[\xx]$. We can view \eqref{res} as a homogeneous system of $M := {{d_1 + d_2 + 1} \choose 2}$ linear equations involving $N:={{d_1 + 1} \choose 2} + {{d_2 + 1}\choose 2}$ variables. Note that the coefficient in the $i$th equation of the $j$th variable, say $c_{ij}$, is a polynomial of $\uu$, that is, $c_{ij} = c_{ij}(\uu)$ for some $c_{ij}(\xx)\in\CC[\xx]$ that depends on $h_1, h_2$ only. Because the system of linear equations has a nontrivial solution and clearly $M > N$, the rank of its coefficient matrix $(c_{ij}(\uu))$ is $<N$. Using the determinants of all $N \times N$ minors of matrix $(c_{ij}(\uu))$, we can rewrite the statement that matrix $(c_{ij}(\uu))$ is of rank $<N$ as $L:={M\choose N}$ polynomial equations of entries in the matrix, say 
    \begin{equation}\label{eq_for_u}
      P_k(c_{ij}(\uu)) = 0, \quad\text{for }k = 1, 2, \dots, L,
    \end{equation} where $P_k(c_{ij}(\xx))$ is a polynomial of $\xx$ independent of $\uu$. Since \eqref{eq_for_u} holds for a generic $\uu\in\PP^2$, we have 
    \begin{equation}
      P_k(c_{ij}(\xx)) = 0 \text{ in }\CC[\xx], \quad\text{for }k = 1, 2, \dots, L.
    \end{equation}
    
    Reversing the argument above, we can deduce that the rank of matrix $(c_{ij}(\xx))$, over the quotient field $\CC(\xx)$, is $<N$, and so there exist two nonzero polynomials $g_1^{\xx}(\yy)\in \CC(\xx)_{\mathrm{hom}}[\yy]$ of degree $<d_2$ and $g_2^{\xx}(\yy)\in \CC(\xx)_{\mathrm{hom}}[\yy]$ of degree $<d_1$ such that \begin{equation}\label{eq_in_cx}
      h_1(\xx,\yy)g_1^{\xx}(\yy) + h_2^{\xx}(\xx, \yy)g_2(\yy) = 0.
    \end{equation}
    Multiplying \eqref{eq_in_cx} by the common denominator of $g_1^{\xx}(\yy)$ and $g_2^{\xx}(\yy)$, we get two nonzero polynomials $g_1(\xx,\yy)\in\CC[\xx,\yy]$ of degree $<d_2$ in $\yy$ and $g_2(\xx,\yy)\in\CC[\xx,\yy]$ of degree $<d_1$ in $\yy$ such that \begin{equation}
      h_1(\xx,\yy)g_1(\xx,\yy) + h_2(\xx,\yy)g_2(\xx,\yy) = 0,
    \end{equation} which is impossible as $\gcdp{h_1, h_2}=1$ and $\deg_{\yy}h_1(\xx,\yy) = d_1 > \deg_{\yy}g_2(\xx,\yy)$.
    
    It remains to prove that $C_1(\uu)$ is reduced for generic $\uu$. Because $h_1(\xx,\yy)\notin \CChom[\xx]$, the polynomial $h_1'(\xx,\yy) := {\partial h_1(\xx,\yy)}/{\partial y_0}$ might be assumed to be nonzero. Again, we assume, on the contrary, that $h_1(\uu,\yy)$ is not square-free for a generic $\uu\in\PP^2$. This implies that $h_1(\uu,\yy)$ and $h_1'(\uu,\yy)$ have a common divisor. The same linear-algebraic argument, applied to $h_1$ and $h_1'$ instead of $h_1$ and $h_2$, then yields a contradiction of the fact that $\gcdp{h_1, h_1'} = 1$.
  \end{proof}
  
  We can apply Theorem~\ref{main_partial_result} to the case when $\PP^2\setminus Y$ is finite, and we obtain a weaker result.
    
  \begin{corollary}
    Suppose $\PP^2\setminus Y = \{\vv_1, \vv_2, \dots, \vv_n\}$.
    If $H\cap(X\times Y)$ is $({2,2})$-grid-free, then either
    \begin{enumerate}[nosep]
      \item there exists $F(\xx,\yy)\in \CChom[\xx,\yy]$ of degree $\le 2$ in $\yy$ such that $H\cap(X\times Y) = \{F = 0\}\cap(X\times Y)$,
      \item or there exists $i\in \{1, 2, \dots, n\}$ such that $\PP^2\times \{\vv_i\} \subset H$.
    \end{enumerate}
  \end{corollary}
  
  \begin{proof}
    Define $D(\vv_i) := \set{\uu\in\PP^2}{(\uu,\vv_i) \in H}$. The second case corresponds to $D(\vv_i) = \PP^2$ for some $i\in\{1,2,\dots, n\}$. Hereafter, we assume that none of those $D(\vv_i)$'s equals $\PP^2$. Note that $D(\vv_i)$ is Zariski-closed for all $i\in[n]$. The set $X' = X\setminus \cup_{i=1}^n D(\vv_i)$ is nonempty Zariski-open subset of $\PP^2$. Because $C(\uu)\subset Y$ for all $\uu \in X'$, we know that $H\cap (X' \times \PP^2) = H\cap (X' \times Y)$ and it is $(2, 2)$-grid-free. Let $H = \{\tilde{F} = 0\}$, and let $f, g$ and $h_1, h_2,\dots, h_n$ be the factors of $\tilde{F}$ as in \eqref{factor2}. By Theorem~\ref{main_partial_result} and Remark~\ref{key_step}, we know that $n = 1$ and $\deg_{\yy}h_1(\xx,\yy)\le 2$.
    
    The set $\{f=0\}\cap X$ is either empty or infinite. So, for $H\cap(X\times Y)$ to be $(2,2)$-grid-free we must have $\variety{f=0}\cap X=\emptyset$. Similarly, we know that $\variety{g = 0} \cap Y = \emptyset$. These imply that the zeros of $F(\xx, \yy) := h_1(\xx, \yy)$ in $X\times Y$ agree with $\tilde{F}$.
  \end{proof}
    
\section{Fields of finite characteristic}\label{sec_finite_char}

  A standard model-theoretic argument allows us to transfer statements over fields of characteristic $0$ to the fields of large characteristic.
  
  \begin{theorem}\label{mt}
    Let $\phi$ be a sentence in the language of rings. The following are equivalent. \begin{enumerate}[nosep]
      \item $\phi$ is true in complex numbers.
      \item $\phi$ is true in every algebraically closed field of characteristic zero.
      \item $\phi$ is true in all algebraically closed fields of characteristic $p$ for all sufficiently large prime $p$.
    \end{enumerate}
  \end{theorem}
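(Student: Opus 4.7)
The plan is to invoke two standard facts from model theory: the completeness of the theories $\mathrm{ACF}_0$ and $\mathrm{ACF}_p$ of algebraically closed fields of fixed characteristic, together with the Compactness Theorem for first-order logic. The equivalence of (1) and (2) is immediate from completeness of $\mathrm{ACF}_0$: since $\CC$ is a model of $\mathrm{ACF}_0$, a sentence $\phi$ holds in $\CC$ iff $\mathrm{ACF}_0 \vdash \phi$, which happens iff $\phi$ holds in every model of $\mathrm{ACF}_0$, i.e.\ in every algebraically closed field of characteristic zero.

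For (2) $\Rightarrow$ (3), I would axiomatize $\mathrm{ACF}_0$ as $\mathrm{ACF}$ together with the infinite schema $\{\chi_p : p\text{ prime}\}$, where $\chi_p$ is the sentence $\underbrace{1+1+\cdots+1}_{p}\neq 0$. If $\mathrm{ACF}_0 \vdash \phi$, then by compactness there is a finite subset of axioms sufficient to prove $\phi$, which involves only finitely many of the $\chi_p$'s, say $\chi_{p_1},\dots,\chi_{p_k}$. Hence $\mathrm{ACF}\cup\{\chi_{p_1},\dots,\chi_{p_k}\} \vdash \phi$, so $\phi$ holds in every algebraically closed field whose characteristic is not one of $p_1,\dots,p_k$; in particular, $\phi$ holds in every algebraically closed field of sufficiently large prime characteristic.

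For (3) $\Rightarrow$ (2), I would argue by contraposition. Suppose $\phi$ fails in some algebraically closed field $K$ of characteristic zero. Then $\neg\phi$ holds in $K$, and by completeness of $\mathrm{ACF}_0$ we have $\mathrm{ACF}_0\vdash\neg\phi$. Applying the previous paragraph to $\neg\phi$, we get that $\neg\phi$ holds in every algebraically closed field of sufficiently large prime characteristic, contradicting (3). This closes the loop.

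The only nontrivial ingredient is the completeness of $\mathrm{ACF}_0$ (and of each $\mathrm{ACF}_p$); this follows from quantifier elimination for algebraically closed fields, or equivalently from Steinitz's theorem that $\mathrm{ACF}_p$ is $\kappa$-categorical for every uncountable $\kappa$ combined with the Łoś--Vaught test. I would simply cite these classical results rather than reprove them, so the only real work is the compactness argument above. The main conceptual point to highlight is that ``sufficiently large'' in (3) is unavoidable and reflects precisely the finite number of $\chi_{p_i}$'s that enter into a given proof.
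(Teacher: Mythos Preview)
Your argument is correct and is precisely the approach the paper indicates: the paper does not give a proof but simply remarks that the theorem is an application of the compactness theorem together with the completeness of the theory of algebraically closed fields of fixed characteristic, citing Marker's text for details. Your write-up spells out exactly this standard compactness-plus-completeness argument, so it matches the paper's intended proof.
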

  
  The theorem, which can be found as a part of {\cite[Corollary 2.2.10]{MR1924282}}, is an application of the compactness theorem and the completeness of the theory of algebraically closed field of fixed characteristic. We refer the readers to \cite[Section 2.1]{MR1924282} for further details of the theorem and related notions.
  
  As quantifiers over all polynomials are not part of the language of rings, one has to limit the degree of hypersurface $H$ and the complexity of the open set $X$ in Theorem~\ref{main_partial_result}. We now formulate the analog over the fields of large characteristic.
  
  \begin{theorem}\label{finite}
    Let $\KK$ be an algebraically closed field of large characteristic, let $H$ be a hypersurface in $\PP^2(\KK)\times \PP^2(\KK)$ of bounded degree, and let $X$ be a Zariski-open subset of $\PP^2(\KK)$ of bounded complexity (i.e. $X$ is a Zariski-open subset of $\PP^2(\KK)$ that can be described by some first-order predicate in the language of rings of bounded length). If $H$ is $(2,2)$-grid-free in $X\times \PP^2(\KK)$, then there exists $F(\xx, \yy)\in\KKhom[\xx,\yy]$ of degree $\le 2$ in $\yy$ such that $H\cap(X\times \PP^2) = \variety{F = 0}\cap(X\times \PP^2)$.
  \end{theorem}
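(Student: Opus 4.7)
The plan is to deduce Theorem~\ref{finite} from Theorem~\ref{main_partial_result} by a direct application of Theorem~\ref{mt} (the Lefschetz principle). The content of the theorem, once the parameters ``bounded degree'' and ``bounded complexity'' are fixed, becomes a first-order sentence in the language of rings which holds over $\CC$; its truth in algebraically closed fields of all sufficiently large characteristic is then automatic.

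Concretely, I would proceed in three steps. First, fix an integer $D$ bounding the bidegree of $H$ and an integer $L$ bounding the length of the first-order predicate that defines the Zariski-open subset $X$. Inspection of the proof of Theorem~\ref{main_partial_result} shows that the polynomial $F$ produced there is (a scalar multiple of) the irreducible factor $h_1$ of the defining equation of $H$, and consequently $\deg_{\xx}F\le\deg_{\xx}H\le D$. Thus the output $F$ may be assumed to have bidegree at most $(D,2)$. Second, I would write a first-order sentence $\phi_{D,L}$ asserting: for every bihomogeneous polynomial $h$ of bidegree $\le(D,D)$ (encoded by its tuple of coefficients) and every choice of parameters for the open set $X$ of complexity $\le L$, if the hypersurface $\variety{h=0}$ intersected with $X\times\PP^2$ is $(2,2)$-grid-free, then there exists a bihomogeneous polynomial $F$ of bidegree $\le(D,2)$ (again, encoded by coefficients) such that $h(\uu,\vv)=0 \Leftrightarrow F(\uu,\vv)=0$ for every $\uu\in X$ and every $\vv\in\PP^2$. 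Third, observe that Theorem~\ref{main_partial_result} says exactly that $\phi_{D,L}$ is true in $\CC$; hence by Theorem~\ref{mt} it is true in every algebraically closed field of characteristic $p$ for all $p\ge p_0(D,L)$, which is the content of Theorem~\ref{finite}.

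The main obstacle is bookkeeping: one must verify that every ingredient in $\phi_{D,L}$ is genuinely first-order expressible with bounded quantifier complexity. Points of $\PP^2(\KK)$ and equality there are not directly in the language of rings; one handles this either by quantifying over representatives in $\KK^3\setminus\{0\}$ and using the proportionality relation, or by covering $\PP^2$ by affine charts and writing a disjunction over the charts. The $(2,2)$-grid-free hypothesis then unfolds as ``there do not exist $\uu_1,\uu_2\in X$ and $\vv_1,\vv_2\in\PP^2$ with $\uu_1\not\sim\uu_2$, $\vv_1\not\sim\vv_2$, and $h(\uu_i,\vv_j)=0$ for $i,j\in\{1,2\}$,'' which is manifestly a bounded-quantifier statement once points are encoded in either of these ways. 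Similarly, the conclusion is a $\forall\uu\,\forall\vv$ statement of polynomial equalities, hence first-order. With these encodings in place, no further algebraic-geometric input is needed beyond Theorem~\ref{main_partial_result} itself.
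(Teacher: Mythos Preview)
Your proposal is correct and follows exactly the approach indicated in the paper: rewrite Theorem~\ref{main_partial_result} (with the degrees bounded) as a first-order sentence in the language of rings and apply Theorem~\ref{mt}. Your observation that the output polynomial $F$ may be taken of bidegree at most $(D,2)$ because it is (up to scalar) the irreducible factor $h_1$ of the defining equation of $H$ is precisely the bound needed to make the existential quantifier over $F$ first-order, and your handling of projective points and of the $(2,2)$-grid-free condition is the routine bookkeeping the paper alludes to but omits.
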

  
  The proof essentially rewrites Theorem~\ref{main_partial_result} as a sentence in the language of rings to which Theorem~\ref{mt} is applicable.
  
  \begin{proof}
    The language of rings, denoted by $\Lr$, consists of binary function symbols $+, -, \cdot$ and constant symbols $0, 1$. Let the hypersurface $H$ be $\{h_{\cc}(\xx, \yy) = 0\}$, where $h$ is the term in $\Lr$ so that $h_{\cc}(\xx, \yy)$ is the polynomial of degree $d_x, d_y$ in $\xx, \yy$ with coefficients $\cc$, and similarly let the Zariski-open set $X$ be described by the first-order predicate $\chi_{\dd}(\xx)$ in $\Lr$, where $\dd$ represents the complex numbers appear in the description. We rewrite parts of Theorem~\ref{main_partial_result} as follows.
    \begin{enumerate}[nosep]
      \item ``$X$ is nonempty'' translates to $\exists \uu\ \chi_{\dd}(\uu)$;
      \item ``$H \cap (X \times \PP^2)$ is $(2, 2)$-grid-free'' translates to $\neg \big[\exists \uu_0\exists \uu_1\exists \vv_0\exists\vv_1. h_{\cc}(\uu_0, \vv_0)=0 \wedge h_{\cc}(\uu_0, \vv_1)=0 \wedge h_{\cc}(\uu_1, \vv_0)=0 \wedge h_{\cc}(\uu_1, \vv_1)=0 \wedge \chi_{\dd}(\uu_0) \wedge \chi_{\dd}(\uu_1)\big]$;
      \item ``$H\cap (X \times \PP^2) = \{F = 0\} \cap (X\times \PP^2)$'' translates to $\forall\uu\forall\vv. \chi_{\dd}(\uu)\to (h_{\cc}(\uu, \vv) = 0 \leftrightarrow F(\uu, \vv) = 0)$.
    \end{enumerate}
    
    Note that the proof of Theorem~\ref{main_partial_result} also shows that $F(\xx, \yy)$ is of degree $\le d_x$ in $\xx$. Hence one can rewrite ``there exists $F(\xx, \yy)\in\CChom[\xx, \yy]$ of degree $\le 2$ in $\yy$ (and of degree $\le d_x$ in $\xx$)'' as a bunch of existential quantifiers over the coefficients of $F$ and a first-order predicate on these coefficients stating that $F(\xx, \yy)$ is homogeneous separately in $\xx$ and $\yy$.
    
    Assembling these parts together, we can rewrite Theorem~\ref{main_partial_result} as a first-order formula $\psi_{h, \chi}(\cc, \dd)$ in $\Lr$ with free variables $\cc$ and $\dd$. Thus its universal closure $\phi_{h, \chi} := \forall \cc \forall \dd. \psi_{h, \chi}(\cc, \dd)$ is true in $\CC$. Let $D\in \NN$ be the bound on the degree of $H$ and the complexity of $X$ in Theorem~\ref{finite}. Denote the set of the first-order terms $\Lr$ of degree $d_x, d_y \le D$ in $\xx, \yy$ by $T_{\mathrm{hom}}$, and the set of the first-order predicates describing Zariski-open sets of length $\le D$ by $P_{\mathrm{open}}$. Note that both $T_{\mathrm{hom}}$ and $P_{\mathrm{open}}$ are finite. Finally we apply Theorem~\ref{mt} to the sentence $\phi := \bigwedge_{h\in T_{\mathrm{hom}}}\bigwedge_{\chi\in P_{\mathrm{open}}} \phi_{h, \chi}$.
  \end{proof}
  
\section{Acknowledgement}
  The second author would like to thank Hong Wang for her suggestion on the choice of terminology in algebraic geometry. We thank the referees for suggestions that helped to improve the exposition.

\bibliographystyle{jalpha}
\bibliography{alg_graph_k2t}

\begin{thebibliography}{DLW07}

\bibitem[AC02]{MR1874328}
Maria Alberich-Carrami{\~n}ana.
\newblock {\em Geometry of the plane {C}remona maps}, volume 1769 of {\em
  Lecture Notes in Mathematics}.
\newblock Springer-Verlag, Berlin, 2002.

\bibitem[ARS99]{MR1699238}
Noga Alon, Lajos R{{\'o}}nyai, and Tibor Szab{{\'o}}.
\newblock Norm-graphs: variations and applications.
\newblock {\em J. Combin. Theory Ser. B}, 76(2):280--290, 1999.

\bibitem[Ben66]{MR0197342}
Clark~T. Benson.
\newblock Minimal regular graphs of girths eight and twelve.
\newblock {\em Canad. J. Math.}, 18:1091--1094, 1966.

\bibitem[Bro66]{MR0200182}
W.~G. Brown.
\newblock On graphs that do not contain a {T}homsen graph.
\newblock {\em Canad. Math. Bull.}, 9:281--285, 1966.

\bibitem[BS74]{MR0340095}
J.~A. Bondy and M.~Simonovits.
\newblock Cycles of even length in graphs.
\newblock {\em J. Combinatorial Theory Ser. B}, 16:97--105, 1974.

\bibitem[DLW07]{MR2359323}
Vasyl Dmytrenko, Felix Lazebnik, and Jason Williford.
\newblock On monomial graphs of girth eight.
\newblock {\em Finite Fields Appl.}, 13(4):828--842, 2007.

\bibitem[ERS66]{MR0223262}
P.~Erd{\H{o}}s, A.~R{{\'e}}nyi, and V.~T. S{{\'o}}s.
\newblock On a problem of graph theory.
\newblock {\em Studia Sci. Math. Hungar.}, 1:215--235, 1966.

\bibitem[ES46]{MR0018807}
P.~Erd{{\"o}}s and A.~H. Stone.
\newblock On the structure of linear graphs.
\newblock {\em Bull. Amer. Math. Soc.}, 52:1087--1091, 1946.

\bibitem[FNV06]{MR2227729}
Zolt{{\'a}}n F{\"{u}}redi, Assaf Naor, and Jacques Verstra{{\"e}}te.
\newblock On the {T}ur{\'a}n number for the hexagon.
\newblock {\em Adv. Math.}, 203(2):476--496, 2006.

\bibitem[FS13]{MR3203598}
Zolt{{\'a}}n F{\"{u}}redi and Mikl{{\'o}}s Simonovits.
\newblock The history of degenerate (bipartite) extremal graph problems.
\newblock In {\em Erd{\"o}s centennial}, volume~25 of {\em Bolyai Soc. Math.
  Stud.}, pages 169--264. J{\'a}nos Bolyai Math. Soc., Budapest, 2013.
\newblock \href{http://arxiv.org/abs/1306.5167}{{\ttfamily
  arXiv:1306.5167[math.CO]}}.

\bibitem[F{\"{u}}r88]{furedi1988quadrilateral}
Zolt{{\'a}}n F{\"{u}}redi.
\newblock Quadrilateral-free graphs with maximum number of edges.
\newblock 1988.
\newblock \url{http://www.math.uiuc.edu/~z-furedi/PUBS/furedi_C4from1988.pdf}.

\bibitem[F{\"{u}}r96a]{MR1395763}
Zolt{{\'a}}n F{\"{u}}redi.
\newblock New asymptotics for bipartite {T}ur{\'a}n numbers.
\newblock {\em J. Combin. Theory Ser. A}, 75(1):141--144, 1996.

\bibitem[F{\"{u}}r96b]{MR1395691}
Zolt{{\'a}}n F{\"{u}}redi.
\newblock An upper bound on {Z}arankiewicz' problem.
\newblock {\em Combin. Probab. Comput.}, 5(1):29--33, 1996.

\bibitem[HLL17]{MR3575461}
Xiang-dong Hou, Stephen~D. Lappano, and Felix Lazebnik.
\newblock Proof of a conjecture on monomial graphs.
\newblock {\em Finite Fields Appl.}, 43:42--68, 2017.
\newblock \href{http://arxiv.org/abs/1507.05306}{{\ttfamily
  arXiv:1507.05306[math.CO]}}.

\bibitem[Jun42]{MR0008915}
Heinrich W.~E. Jung.
\newblock \"{U}ber ganze birationale {T}ransformationen der {E}bene.
\newblock {\em J. Reine Angew. Math.}, 184:161--174, 1942.

\bibitem[KRS96]{MR1417348}
J{{\'a}}nos Koll{{\'a}}r, Lajos R{{\'o}}nyai, and Tibor Szab{{\'o}}.
\newblock Norm-graphs and bipartite {T}ur{\'a}n numbers.
\newblock {\em Combinatorica}, 16(3):399--406, 1996.

\bibitem[KST54]{MR0065617}
T.~K{{\"o}}vari, V.~T. S{{\'o}}s, and P.~Tur{{\'a}}n.
\newblock On a problem of {K}. {Z}arankiewicz.
\newblock {\em Colloquium Math.}, 3:50--57, 1954.

\bibitem[Kur10]{MR2654304}
Shigeru Kuroda.
\newblock Shestakov-{U}mirbaev reductions and {N}agata's conjecture on a
  polynomial automorphism.
\newblock {\em Tohoku Math. J. (2)}, 62(1):75--115, 2010.
\newblock \href{http://arxiv.org/abs/0801.0117}{{\ttfamily
  arXiv:0801.0117[math.AC]}}.

\bibitem[Mar02]{MR1924282}
David Marker.
\newblock {\em Model theory: an introduction}, volume 217 of {\em Graduate
  Texts in Mathematics}.
\newblock Springer-Verlag, New York, 2002.

\bibitem[Mou04]{MR2019943}
Claire Moura.
\newblock Local intersections of plane algebraic curves.
\newblock {\em Proc. Amer. Math. Soc.}, 132(3):687--690, 2004.

\bibitem[Nag72]{nagata1972automorphism}
Masayoshi Nagata.
\newblock {\em On automorphism group of $k[x, y]$}, volume~5 of {\em Lectures
  in Mathematics, Department of Mathematics, Kyoto University}.
\newblock Kinokuniya Book-Store Co. Ltd., Tokyo, 1972.

\bibitem[Sha13]{MR3100243}
Igor~R. Shafarevich.
\newblock {\em Basic algebraic geometry 1: varieties in projective space},
  volume~1.
\newblock Springer, Heidelberg, russian edition, 2013.

\bibitem[SU03]{MR2017754}
Ivan~P. Shestakov and Ualbai~U. Umirbaev.
\newblock The {N}agata automorphism is wild.
\newblock {\em Proc. Natl. Acad. Sci. USA}, 100(22):12561--12563 (electronic),
  2003.

\bibitem[SU04]{MR2015334}
Ivan~P. Shestakov and Ualbai~U. Umirbaev.
\newblock The tame and the wild automorphisms of polynomial rings in three
  variables.
\newblock {\em J. Amer. Math. Soc.}, 17(1):197--227 (electronic), 2004.

\bibitem[Tur41]{turan1941extremal}
Paul Tur{{\'a}}n.
\newblock On an extremal problem in graph theory.
\newblock {\em Mat. Fiz. Lapok}, 48(436-452):137, 1941.

\bibitem[vdK53]{MR0054574}
W.~van~der Kulk.
\newblock On polynomial rings in two variables.
\newblock {\em Nieuw Arch. Wiskunde (3)}, 1:33--41, 1953.

\bibitem[Wen91]{MR1109426}
R.~Wenger.
\newblock Extremal graphs with no {$C^4$}'s, {$C^6$}'s, or {$C^{10}$}'s.
\newblock {\em J. Combin. Theory Ser. B}, 52(1):113--116, 1991.

\end{thebibliography}

\end{document}